\newenvironment{proof}{\medskip                    
\noindent{\scshape Proof:}}{\quad $\square$
\medskip}  
\newtheorem{theorem}{Theorem}[section]
\newtheorem{proposition}{Proposition}[section]
\newtheorem{corollary}{Corollary}[section]
\newtheorem{algorithm}{Algorithm}[section]
\newtheorem{definition}{Definition}[section]
\newcommand{\arxiv}[1]{\href{http://www.arXiv.org/abs/#1}{arXiv:#1}}
\newcommand{\myinf}{\mathop{\text{\Large$\wedge$}}}
\def\ltr{\text{``}}
\def\rtr{\text{''}}
\def\digr{{\mathcal D}}
\def\Rmax{\mathbb{R}_{\max}}
\def\R{\mathbb{R}}
\def\Rnn{\R^{n\times n}}
\def\bunity{\mathbf{1}}
\def\rad{\rho}
\def\tplus{\oplus}
\def\lplus{\tplus}
\def\myplus{\tplus}
\def\bigtplus{\bigoplus}
\def\tmult{\otimes}
\def\lmult{\tmult_L}
\def\bzero{\mathbf{0}}
\def\crit{{\mathcal C}}
\def\unint{[0,1]}
\def\rad{\rho}
\def\amine{A^{(1)}}
\def\aminle{A^{(\lambda)}}
\def\aminlambda{\aminle}
\def\aminre{A^{(\rad)}}
\def\tconv{\operatorname{tconv}}
\def\Schur{\operatorname{Schur}}
\def\otimesluk{\otimes_L}
\begin{document}

\title{Tropical linear algebra with the {\L}ukasiewicz T-norm}

\author[rvt]{Martin Gavalec\fnref{fn1}}
\ead{martin.gavalec@uhk.cz}

\author[rvt]{Zuzana N\v{e}mcov\'{a}\fnref{fn1}}
\ead{zuzana.nemcova@uhk.cz}

\author[rvt2]{Serge{\u\i} Sergeev\corref{cor}\fnref{fn2}}
\ead{sergiej@gmail.com}

\address[rvt]{University of Hradec Kr\'{a}lov\'{e}/Faculty of Informatics and Management, Rokitansk\'{e}ho 62,
Hradec Kr\'{a}lov\'{e} 3, 500 03, Czech Republic}
\address[rvt2]{University of Birmingham, School of Mathematics, Edgbaston B15 2TT}

\cortext[cor]{Corresponding author. Email: sergiej@gmail.com}
\fntext[fn1]{Supported by the Czech Science Foundation project \# 14-02424S and Grant Agency of
Excellence UHK FIM \# 2204.}

\fntext[fn2]{Supported by EPSRC grant EP/J00829X/1, RFBR grant
12-01-00886. This research was
initiated when the third author was with INRIA and CMAP \'Ecole
Polytechnique, 91128 Palaiseau Cedex, France.}

\begin{abstract}
The max-{\L}ukasiewicz semiring is defined as the unit interval
$[0,1]$ equipped with the arithmetics $\ltr a+b\rtr=\max(a,b)$ and
$\ltr ab\rtr=\max(0,a+b-1)$. Linear algebra over this semiring can
be developed in the usual way. We observe that any 
problem of the max-{\L}ukasiewicz linear algebra can be equivalently formulated
as a problem of the tropical (max-plus) linear algebra.
Based on this equivalence, we
develop a theory of the matrix powers and the eigenproblem over the
max-{\L}ukasiewicz semiring.
\end{abstract}

\begin{keyword}
tropical, max-plus, {\L}ukasiewicz, eigenvector, matrix power \vskip0.1cm {\it{AMS
Classification:}} 15A80, 15A06, 15A18
\end{keyword}

\maketitle

\section{Introduction}\label{s:introduction}
The max-{\L}ukasiewicz semiring is defined over the interval $[0,1]$
equipped with the operations of addition $a\oplus b=\max(a,b)$ and
multiplication $a\otimesluk b=\max(0,a+b-1)$.  These operations are
extended to matrices and vectors in the usual way: $(A\tplus
B)_{ij}=a_{ij}\tplus b_{ij}$ and $(A\otimesluk B)_{ik}=\bigtplus_j
a_{ij}\otimesluk b_{jk}$. We consider the
max-{\L}ukasiewicz powers of matrices $A^{\otimesluk k}
:=\overbrace{A\otimesluk\cdots\otimesluk A}^{k},$ and the spectral
problem over max-{\L}ukasiewicz semiring: given $\lambda\in[0,1]$,
find $x\in[0,1]^n$ with not all components $0$ such that
$A\otimesluk x =\lambda\otimesluk x$.

Our study of max-{\L}ukasiewicz semiring is motivated by the recent
success of tropical linear algebra, developed over the max-plus
semiring $\Rmax=\R\cup\{-\infty\}$ equipped with operations of
``addition'' $a\oplus b=\max(a,b)$ and ``multiplication'' $a\otimes
b=a+b$. We convert the problems of max-{\L}ukasiewicz linear
algebra, i.e., the linear algebra over max-{\L}ukasiewicz semiring,
to the problems of tropical (max-plus) linear algebra. Then we can take
advantage of the well-developed theory and algorithms of the latter,
see~\cite{ABG-06,BCOQ, But:10,  BSS-12, CG:79, GK-07, GKS-12}, to
mention only a few possible sources. Our main ingredients are the
theory of spectral problem $A\otimes x=\lambda\otimes x$, Bellman
equations or Z-equations $x=A\otimes x\oplus b$, one-sided systems
$A\otimes x=b$ and, occasionally, the tropical linear programming.
Recall that the basic theory of one-sided systems $A\otimes x=b$ involves
residuation theory~\cite{CC-95} as well as set coverings~\cite{But:10}.
Also see Rashid et al.~\cite{RGS-12} for a preliminary work on the
max-{\L}ukasiewicz linear algebra exploring details of the
three-dimensional case.

A basic idea behind this paper is that each fuzzy triangular norm
(t-norm) as described, for instance, in Klement, Mesiar and
Pap~\cite{KMP:00}, leads to an idempotent semiring, which we call a
max-t semiring. There are directions of abstract fuzzy sets theory
which are related to the present work. Some connections of
multivalued logic and fuzzy algebra with idempotent mathematics have
been developed by Di Nola et al.~\cite{DNG-05,DN+07,DNR-07,DNR-13}. These works develop certain
aspects of algebra over semirings arising from fuzzy logic
(MV-algebras, {\L}ukasiewicz transform), which currently seem most
interesting and useful for the fuzzy sets theory.

However, neither general MV-algebras~\cite{DN+07} nor even the special case of 
{\L}ukasiewicz MV-algebra are considered in this paper.  We are rather
motivated by the basic problems of the tropical linear algebra,
which we are going to consider here in the context of
max-{\L}ukasiewicz semiring. We also remark that max-{\L}ukasiewicz
semiring can be seen as a special case of incline algebras of Cao,
Kim and Roush~\cite{CKR:84}, see also, e.g., Han-Li~\cite{HL-04} and
Tan~\cite{Tan-11}. One of the main problems considered in that
algebra is to study the periodicity of matrix powers over a larger
class of semirings, using lattice theory, lattice ordered groups and
residuations.  Let us also recall the distributive lattices as another
special case of incline algebras, although this special case does not include the
max-{\L}ukasiewicz semiring. Powers of matrices over distributive lattices
are studied, e.g., by Cechl\'arov\'a~\cite{Cech}.

The approach which we develop here, does not apply to
incline algebras in general (or to distributive lattices in particular), but it allows to study the
linear-algebraic problems over max-{\L}ukasiewicz algebra in much
more detail.

{\bf Aggressive network.} Let us also recall the basic network
motivation to study tropical (max-plus) linear algebra and, more generally,
linear algebra over semirings (Gondran-Minoux~\cite{GM:08}, see also
Litvinov-Maslov~\cite{LM-98}). This motivation suggests a directed
graph $\digr$ with nodes $N=\{1,\ldots,n\}$ and edges
$E=\{(i,j)\colon i,j\in N\}$, where each edge $(i,j)$ is weighted by
$a_{ij}$. Sarah is an agent travelling in the network. She is given
$1$ unit of money before entering it at node $i$ (say, $1$ thousand
of GBP), and $a_{ij}$ expresses the amount of money left on her bank
account after she moves from $i$ to $j$. The quantity
$c_{ij}:=1-a_{ij}$ expresses the cost of moving from $i$ to $j$.
More generally, if Sarah is given $x_i$ units of money, with $0\leq
x_i\leq 1$, then there are two cases: when $x_i-c_{ij}\geq 0$ and
when $x_i-c_{ij}<0$. In the first case, $x_i-c_{ij}=x_i+a_{ij}-1$
will be the money left on her account after she moves from $i$ to
$j$. In the second case, Sarah's account will be frozen, in other
words, her balance will be set to $0$ forever and with no excuse. In
any case, $x_i\otimesluk a_{ij}$ expresses the amount of money left
on Sarah's account if she goes from $i$ to $j$. More generally, if
Sarah is given $1$ unit of money and follows a walk
$P=(i_1,\ldots,i_k)$ on $\digr$, then the {\L}ukasiewicz weight of
$P$ computed as $w_L(P)=a_{i_1i_2}\otimesluk\cdots\otimesluk
a_{i_{k-1i_k}}$ will show how much money will remain on her bank
account. Computing matrix powers over {\L}ukasiewicz semiring, it
can be seen that the entry $(A^{\otimes_L t})_{ij}$ shows Sarah's
funds at $j$ if she chooses an optimal walk from $i$ to $j$.
Computing the left orbit of a vector, $(x\otimesluk A^{\otimes_L
t})_i$ also shows Sarah's funds if 1) for each $\ell$, $x_{\ell}$ is
the amount of money given to her if she enters the network in state
$\ell$, 2) she chooses an optimal starting node and an optimal walk
from that node to $i$, where the {\L}ukasiewicz weight of a walk
$P=(i_1,\ldots,i_k)$ is now computed as $x_{i_1}\otimesluk
a_{i_1i_2}\otimesluk\cdots\otimesluk a_{i_{k-1}i_k}$.

In the context of aggressive network, we can pose the {\L}ukasiewicz spectral problem
$x\otimesluk A=\lambda\otimesluk x$ if we want to control the dynamics of Sarah's funds, for instance, to
know precisely when the game will be over. As $(\lambda\otimesluk x)_i=\max(\lambda-1+x_i,0)$, it is also natural to
take a partition $(K,L)$ of $\{1,\ldots,n\}$, that is, the subsets $K,L\subseteq\{1,\ldots,n\}$ such that
$K\cup L=\{1,\ldots,n\}$ and $K\cap L=\emptyset$, and impose that $x_i\leq 1-\lambda$ for $i\in L$ and
$x_i\geq 1-\lambda$ for $i\in K$. The {\L}ukasiewicz eigenvectors satisfying these conditions are called
$(K,L)$-{\L}ukasiewicz eigenvectors. As we shall see, when $K$ and $L$ are proper subsets of $\{1,\ldots,n\}$,
the existence of $(K,L)$ eigenvectors is equivalent to $(K,L)$ being a ``secure partition'' of the network
where we subtract $\lambda$ from each edge, see Definition~\ref{def:secure}. This establishes a connection
between the {\L}ukasiewicz spectral problem and the combinatorics of weighted digraphs. The network sense of
$(K,L)$-eigenvectors is also clear if we require the strict inequalities $x_i< 1-\lambda$ for $i\in L$ and
$x_i> 1-\lambda$ for $i\in K$: in this case we know how much Sarah should be given in each state, in order that that the game will be over after one step if Sarah starts in any node of $L$, and in order that she has a chance to live longer if she starts in a node of $K$ and chooses an optimal trajectory.

For convenience, in the paper we will consider {\bf right} {\L}ukasiewicz eigenvectors and orbits.

\begin{figure}
 \begin{center}
  \includegraphics[width=320pt]{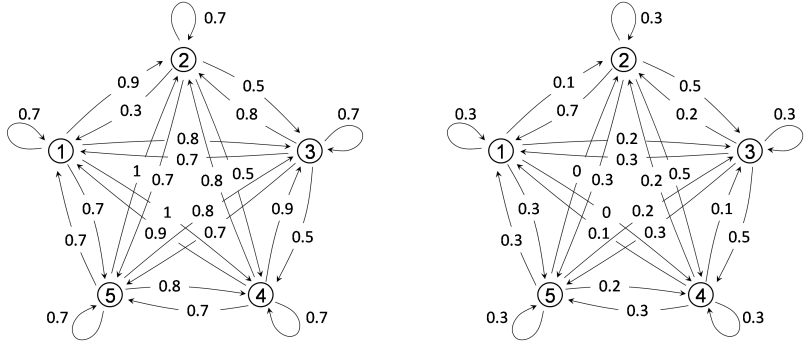}
  \caption{An example of aggressive network: values of travel costs (left) and account
  balance after one step (right)}\label{f:network}
  \end{center}
\end{figure}
Figure~\ref{f:network} represents an example of aggressive network.
The weights of edges on the left denote Sarah's debts, or payments
if she is able to make them, and the weights on the right stand for
her balance after moving along the corresponding edge.

The rest of the paper is organized as follows. Section~2 is occupied
with necessary preliminaries on tropical convexity and tropical
linear algebra. Section~3 develops the theory of
$(K,L)$-{\L}ukasiewicz eigenvectors and secure partitions. Here we
give an explicit description of generating sets of
$(K,L)$-{\L}ukasiewicz eigenspaces, establish the connection with
the problem of finding secure partitions of weighted digraphs, and
provide an algorithm for enumeration of all possible secure
partitions. In Section 4 we examine the powers and orbits of
matrices in {\L}ukasiewicz semiring, whose theory is closely related
to its well-known tropical (max-plus) counterpart.

\section{Basics of the tropical linear algebra}
\label{s:tropical}

The main idea of this paper is that the max-{\L}ukasiewicz linear
algebra is closely related to the tropical (max-plus) linear
algebra. A key observation relating the {\L}ukasiewicz linear
algebra with the tropical linear algebra is that, given a matrix
$A\in\unint^{m\times n}$ and a vector $x\in\unint^n$, we have
\begin{equation}
\label{convert}
A\lmult x=\amine\otimes x \myplus\bzero,
\end{equation}
where $\amine$ denotes the matrix with entries $a_{ij}-1$, and $\bzero$ is the vector with $m$ entries all equal
to $0$. Note that the entries of $\amine$ belong to $[-1,0]$, while the entries of $x$ are required to be in $[0,1]^n$.
More generally, we denote by $A^{(\alpha)}$
the matrix with entries $a_{ij}-\alpha$.

Sometimes we will also use the min-plus version of the tropical linear algebra, in particular, the min-plus addition
$a\wedge b:=\min(a,b)$ and the min-plus matrix product $(A\otimes' B)_{ik}=\myinf_j a_{ij}+b_{jk}$.
Denoting by $A^{\sharp}$ the matrix with entries $(-a_{ji})$, known as {\em Cuninghame-Green inverse},
one obtains the following duality law:
\begin{equation}
\label{e:resid}
A\otimes x\leq b \Leftrightarrow x\leq A^{\sharp}\otimes' b
\end{equation}

In the sequel we usually omit the $\otimes$ sign for tropical matrix
multiplication, unlike the $\otimesluk$ sign for the
max-{\L}ukasiewicz multiplication. We are only interested in
matrices and vectors with real entries, that is, with no $-\infty$
entries.

We note here that we only need the special case of tropical(max-plus)
semiring here. See, e.g., Gondran and Minoux~\cite{GM:08,GM-07} for more
general idempotent algebras (dio\"{\i}ds).

\subsection{Elements of tropical convexity}

A set $C\in(\R\cup\{-\infty\})^n$ is called tropically convex if together with any two points $x,y\in C$ it contains the whole
tropical segment
\begin{equation}
\label{tsegment}
[x,y]_{\oplus}:=\{\lambda x\oplus\mu y\colon \lambda\tplus\mu=0\}.
\end{equation}
Note that $x\oplus y\in[x,y]_{\oplus}$.

The {\bf tropical convex hull} of a set $X\subseteq (\R\cup\{-\infty\})^n$ is defined as
\begin{equation}
\label{tconv-def}
\tconv(X):=\{\bigoplus_{\mu} \lambda_{\mu} x_{\mu}\colon \bigoplus_{\mu}\lambda_{\mu}=0, x_{\mu}\in X\},
\end{equation}
where only a finite number of $\lambda_{\mu}$ are not $-\infty$. In this case, the {\em tropical
Carath\'eodory theorem} states that in~\eqref{tconv-def}, we can restrict without loss of generality 
to tropical convex combinations of no more than
$n+1$ points $x_{\mu}$.

As in the case of the usual convexity, there exists an {\em internal
description} of tropical convex sets in terms of extremal points and
recessive rays, and {\em external description} as intersection of
(tropical) halfspaces. See \cite{All:09,CGQS-05,DS-04,GK-07} for
some of the recent references. Here we will be mostly interested in
the internal description of a compact tropical convex set $C$ based
on {\em extreme points}: if represented as a point in a tropical
segment of $C$, such a point should coincide with one of its ends.

It follows that the linear equations over max-{\L}ukasiewicz
semiring are affine equations over max-plus semiring, with the
solutions confined in the hypercube $[0,1]^n$. The solution sets to
systems of such equations are compact and tropically convex.
Moreover, they are {\em tropical polyhedra}, i.e. tropical convex
hulls of a finite number of points.

A tropical analogue of a theorem of Minkowski was proved in full
generality by Gaubert and Katz~\cite{GK-07}, see also Butkovi\v{c}
et al.~\cite{BSS} for a part of this result. Here we are interested
in the particular case of compact tropically convex sets in $\R^n$.

\begin{theorem}
\label{t:mink}
Let $C\subseteq\R^n$ be a compact tropical convex set.
Further let $u^{(\mu)}$ be a (possibly infinite) set of its extreme points. Then,
\begin{equation}
\label{e:mink}
C=\left\{\bigoplus_{\mu} \lambda_{\mu}u^{(\mu)}, \lambda_{\mu}\in\R\cup\{-\infty\},\ \bigoplus_{\mu} \lambda_{\mu}=0\right\}.
\end{equation}
where in~\eqref{e:mink}, only a finite number of $\lambda_{\mu}$ are not equal to $-\infty$.
In words, any compact tropically convex subset of $\R^n$ is generated by its extreme points.
\end{theorem}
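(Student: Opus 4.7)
The plan is to prove the theorem as a tropical Krein--Milman result: every point of the compact tropically convex set $C$ is a finitely supported tropical convex combination of its extreme points. The main ingredients are (a) the abundance of extreme points on every nontrivial face of $C$, and (b) a tropical separation theorem for closed tropically convex subsets of $\R^n$; the tropical Carath\'eodory theorem quoted in the excerpt then truncates arbitrary combinations to at most $n+1$ terms.

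To produce extreme points, I would observe that for any tropical affine functional $f(y) = \bigoplus_i (a_i + y_i)$, the face $F_f := \{y \in C \colon f(y) = \min_{z \in C} f(z)\}$ is nonempty by compactness and continuity of $f$, and is itself a compact tropically convex subset of $C$. Iterating this face-taking operation with suitably chosen functionals, or applying a Zorn-type argument on descending chains of faces, eventually yields a singleton, which is an extreme point of $C$. A crucial \emph{face lemma} says that every extreme point of $F_f$ is already extreme in $C$: if such a point $p$ lay on a nontrivial tropical segment with endpoints in $C$, the minimality of $f(p)$ on $C$ would force both endpoints into $F_f$, contradicting the extremality of $p$ in $F_f$.

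With extreme points in hand, the main claim would be established by contradiction. Let $K$ denote the tropical convex hull of all extreme points $\{u^{(\mu)}\}$; clearly $K \subseteq C$. If some $x \in C \setminus K$ existed, a tropical separation theorem applied to the closed tropically convex set $\overline{K}$ would produce a tropical affine functional $f$ with $\min_{y \in \overline{K}} f(y) > \min_{y \in C} f(y)$, so that the minimizing face $F_f \subseteq C$ is disjoint from $\overline{K}$. By the face lemma, $F_f$ must contain an extreme point of $C$, which lies in $\{u^{(\mu)}\} \subseteq K \subseteq \overline{K}$ --- a contradiction. Hence $C = K$, and the tropical Carath\'eodory theorem then writes each point of $K$ as a combination of at most $n+1$ extreme points, giving formula~\eqref{e:mink}.

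The principal technical hurdle is the separation step, which requires a tropical Hahn--Banach-type theorem for closed tropically convex subsets of $\R^n$; such results are available from the cited works of Cohen--Gaubert--Quadrat and Gaubert--Katz, but require care because tropical halfspaces take the nonstandard form $\{y : \bigoplus_i (a_i + y_i) \leq \bigoplus_i (b_i + y_i)\}$, and the strict separation of $x$ from $\overline{K}$ must be adapted accordingly. A related subtlety is that $K$ need not be closed a priori, which is why the separation must be applied to $\overline{K}$; closing the gap between $\overline{K}$ and $K$ in the final step ultimately relies on compactness of $C$ combined with the Carath\'eodory truncation, so that one may pass to a limit of $(n+1)$-term combinations on a compact coefficient simplex and recover a representation using the same extreme points.
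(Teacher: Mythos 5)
The paper does not actually prove Theorem~\ref{t:mink}: it quotes it from Gaubert and Katz~\cite{GK-07} (see also~\cite{BSS}), and the argument there is direct, with no separation theorem. For $x\in C$ and each coordinate $i$ one considers $M_i(x)=\{y\in C\colon y\leq x,\ y_i=x_i\}$; compactness yields a minimal element $v^{(i)}$ of $M_i(x)$, minimality forces $v^{(i)}$ to be an extreme point of $C$, and $x=\bigoplus_i v^{(i)}$ by construction. This gives~\eqref{e:mink} with at most $n$ terms and all $\lambda_\mu=0$, with no Hahn--Banach machinery and no Carath\'eodory truncation needed.

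Your Krein--Milman route, besides being a different strategy, has genuine gaps. First, the face lemma as you justify it fails: if $p=\lambda x\oplus\mu y$ with $\lambda\oplus\mu=0$, only the endpoint whose coefficient is $0$ is forced into $F_f$; when, say, $\mu<0$, one has $f(p)=\max\bigl(\lambda+f(x),\mu+f(y)\bigr)$, which can equal $\min_C f$ while $f(y)>\min_C f$, so $y$ need not lie in $F_f$ and extremality of $p$ in $F_f$ yields no contradiction. Second, the separation step is not available in the form you need: tropical separation of a point from a closed tropically convex set produces a halfspace of the form $\{y\colon\bigoplus_i(a_i+y_i)\leq\bigoplus_j(b_j+y_j)\}$, a comparison of two max-plus functionals, not a sublevel set of a single one, so the inequality $\min_{\overline{K}}f>\min_C f$ for a single tropical affine $f$ does not follow from the cited results; you flag this hurdle but do not resolve it, and it is exactly where the argument breaks. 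Third, the claim that iterated face-taking terminates in a singleton is unsupported (tropical minimizing faces need not shrink under further functionals), and the final limiting step tacitly assumes the set of extreme points is closed, which already fails for classical compact convex sets in dimension at least $3$ and is not guaranteed here. I would replace the whole scheme by the minimal-element argument above.
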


\subsection{Cyclicity theorem}
\label{ss:cyclicity} Starting from this subsection, all matrices
have only finite entries, no $-\infty$. We now consider the sequence
of max-plus matrix powers $A^k=\overbrace{A\otimes\cdots\otimes
A}^{k}$, for $A\in\Rnn$.

The max-algebraic {\em cyclicity theorem} states that if the maximum
cycle mean of $A\in\Rnn$  (to be defined later) equals $0$, then the
sequence of max-plus matrix powers $A^k$ becomes periodic after some
finite transient time $T(A)$, and that the ultimate period of $A^k$
is equal to the cyclicity of the critical graph. Cohen et
al.~\cite{CDQV-83} seem to be the first to discover this, see also
\cite{ABG-06,BCOQ,But:10,CG:79,HOW:05}. Generalizations to reducible
case, computational complexity issues and important special cases of
the cyclicity theorem have been extensively studied in
\cite{But:10,BdS,Gav-00,Gav:04,Mol-05,Ser-11}.

Below we need this theorem in the form of CSR-representations as
in~\cite{Ser-09,SS-12}. To formulate it precisely we need the
following concepts and notation. The notions of the associated
digraph, the maximum cycle mean, the critical graph and the Kleene
star are of general importance in the tropical linear algebra.

For a matrix $A\in\Rnn$, define the associated weighted digraph
$\digr(A)=(N,E)$, where $N=\{1,\ldots,n\}$, $E=N\times N$ and each
edge $(i,j)\in E$ has weight $a_{ij}$. Conversely, each weighted
digraph with real entries on $n$ nodes corresponds to an $n\times n$
real matrix.

Let $\rad(A)$ denote the {\bf maximum cycle mean} of $A$, i.e.,
\begin{equation}
\label{mcmdef}
\rad(A)=\max\limits_{k=1}^n\max\limits_{i_1,\ldots,i_k} \frac{a_{i_1i_2}+\ldots+a_{i_ki_1}}{k}.
\end{equation}
The cycles $(i_1,\ldots,i_k)$ where the maximum cycle mean is
attained are called {\bf critical}. Further, the {\bf critical
graph}, denoted by $\crit(A)$, consists of all nodes and edges
belonging to critical cycles. As it will be emphasized later,
$\rad(A)$ also plays the role of the (unique) tropical eigenvalue of
$A$.

The sum of formal series
\begin{equation}
\label{kls-def}
A^*:= I\tplus A\tplus A^2\tplus\ldots
\end{equation}
is called the {\bf Kleene star} of $A\in\Rnn$. Here $I$ denotes the tropical identity matrix, i.e., the matrix with diagonal entries
equal to $0$ and the off-diagonal entries equal to $-\infty$. Series~\eqref{kls-def} converges if and only if
$\rad(A)\leq 0$, in which case $A^*=I\tplus A\tplus\ldots\tplus A^{n-1}$. The Kleene star satisfies
\begin{equation}
\label{kls-prop}
A^*=AA^*\tplus I.
\end{equation}

Defining the {\bf additive weight} of a walk $P$ on $\digr(A)$ as
sum of the weights of all edges contained in the walk, observe that
the following {\bf optimal walk interpretation} of tropical matrix
powers $A^k$ and Kleene star $A^*$: 1) for each pair $i,j$, the
$(i,j)$ entry of $A^k$ is equal to the greatest additive weight of a
walk connecting $i$ to $j$ with length $k$, 2) for each pair $i,j$
with $i\neq j$, the $(i,j)$ entry of $A^*$ is equal to the greatest
additive weight of a walk connecting $i$ to $j$ (with no length
restriction)\footnote{In what follows, by the weight of a walk we
mean this additive weight, and not the {\L}ukasiewicz weight defined
in Introduction}.

We introduce the notation related to $CSR$-representation. Let
$C\in\R^{n\times c}$ and $R\in\R^{c\times n}$ be matrices extracted
from the critical columns (resp. rows) of
$((A-\rad(A))^{\gamma})^*$, where $\gamma$ is the {\bf cyclicity} of
$\crit(A)$. To calculate $\gamma$ by definition, one needs to take
the g.c.d. of the lengths of all simple cycles in each strongly
connected component of $\crit(A)$, and then to take the l.c.m. of
these. Without loss of generality we are assuming that $\crit(A)$
occupies the first $c$ nodes of the associated graph.

Let $S$ be defined by
\begin{equation}
s_{ij}=
\begin{cases}
a_{ij}-\rad(A), & \text{if $(i,j)\in\crit(A)$},\\
-\infty, & \text{otherwise}.
\end{cases}
\end{equation}

By $A_{\cdot i}$ and $A_{i\cdot}$ we denote the $i$th column,
respectively the $i$th row of $A$.

\begin{theorem}
\label{CSR}
For any matrix
 $A\in\R^{n\times n}$ there exists a number $T(A)$ such that for all $t\geq T(A)$
\begin{equation}
A^t=\rad^t(A)CS^tR.
\end{equation}
Moreover, $(A^t)_{i\cdot}=\rad^t(A)S^tR_{i\cdot}$ and $(A^t)_{\cdot i}=\rad^t(A)C_{\cdot i}S^t$ for all
$i=1,\ldots,c$
\end{theorem}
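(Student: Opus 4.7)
The plan is to first reduce to $\rad(A)=0$ by replacing $A$ with $\aminre$; since $C$, $R$, and $S$ are defined with reference to this normalized matrix and $A^t$ differs from $(\aminre)^t$ only by the scalar $\rad^t(A)$, it suffices to prove $A^t=CS^tR$ under the assumption $\rad(A)=0$. In this normalized setting every edge of $\crit(A)$ has weight $0$, every non-critical cycle has strictly negative mean, and $(A^\gamma)^*=I\oplus A^\gamma\oplus(A^\gamma)^2\oplus\cdots$ has finite entries.

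The heart of the argument is a walk-theoretic interpretation of $CS^tR$. By the optimal-walk meaning of tropical products, $(CS^tR)_{ij}$ is the maximum additive weight over triples of walks: a walk from $i$ to some critical node $k$ of length a nonnegative multiple of $\gamma$ (weight $C_{ik}$), followed by a walk of length exactly $t$ from $k$ to another critical node $l$ inside $\crit(A)$ (weight $(S^t)_{kl}$), followed by a walk from $l$ to $j$ of length a nonnegative multiple of $\gamma$ (weight $R_{lj}$). The inequality $CS^tR\leq A^t$ then follows by realizing such a composite walk as a walk of length exactly $t$ in $\digr(A)$: because critical edges have weight $0$ and the cyclicity of each strongly connected component of $\crit(A)$ divides $\gamma$, any extra $m\gamma$ steps needed to align the composite with total length $t$ can be absorbed by inserting or removing length-$\gamma$ critical cycles in the middle segment at no cost in weight, provided $t$ is large enough that this middle segment has room for the adjustment.

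The reverse inequality $A^t\leq CS^tR$ is the main obstacle and is exactly where the threshold $T(A)$ is produced. The key input is the strict negativity of every non-critical cycle mean: if $\delta>0$ denotes the gap between $0$ and the second-largest cycle mean of $A$, then any walk that uses many non-critical edges loses a uniformly positive amount of weight, so by a pigeonhole-type argument the non-critical portion of an optimal length-$t$ walk has length bounded by some constant $T_0$ depending only on $n$ and $\delta$. For $t$ sufficiently large, an optimal walk therefore decomposes as a short prefix reaching $\crit(A)$ at some node $k$, a long middle portion inside $\crit(A)$, and a short suffix leaving $\crit(A)$ at some $l$. Since $S^\gamma$ is primitive on each strongly connected component of $\crit(A)$, the relation $S^{t+\gamma}=S^t$ holds for large $t$, which allows the length of the middle portion to be shifted by multiples of $\gamma$ so as to match the structure encoded by $CS^tR$ (where the prefix and suffix contribute lengths that are already multiples of $\gamma$).

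Finally, the ``moreover'' identities for $i\leq c$ come out as corollaries: when $i$ is critical no nontrivial entrance walk is required, $C_{ii}=((A^\gamma)^*)_{ii}=0$, and the same walk-dominance argument shows that routing through any other critical node cannot strictly improve the weight, so $C_{i\cdot}$ acts as the $i$-th tropical unit row on $S^t$ and $CS^tR$ collapses to $(S^tR)_{i\cdot}$. The column version is symmetric. The hardest technical step in this whole plan is producing an explicit, combinatorially controllable $T(A)$ that simultaneously validates the length-adjustment lemma inside $\crit(A)$ and the quantitative bound $T_0$ on non-critical excursions.
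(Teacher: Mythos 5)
First, note that the paper does not prove Theorem~\ref{CSR} at all: it is quoted as a known result, with the proof delegated to~\cite{Ser-09} and~\cite{SS-12}. So your sketch is not competing with an in-paper argument but with those references, and it does follow their general line (normalization, optimal-walk interpretation of $CS^tR$, spectral gap forcing long optimal walks into $\crit(A)$, periodicity of $S^t$). Within that line, however, there are two genuine gaps. The first is the claim that after replacing $A$ by $\aminre$ ``every edge of $\crit(A)$ has weight $0$.'' Subtracting $\rad(A)$ only makes every \emph{critical cycle} have total weight $0$; individual critical edges can still have nonzero weight (e.g.\ a $2\times 2$ matrix with $a_{12}=1$, $a_{21}=-1$ and very negative diagonal has $\rad(A)=0$ and critical edges of weights $1$ and $-1$). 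To make critical edges vanish you need the additional diagonal similarity scaling (``visualization''), and you must then check that $C$, $S$, $R$ and the statement of the theorem transform consistently under that scaling. Your insertion/removal of zero-weight critical cycles, and the identity $S^{t+\gamma}=S^t$ as an identity of \emph{weighted} matrices, both silently rely on this missing step.

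The second and more serious gap is the length-matching argument, which you correctly flag as the hardest step but do not actually carry out. The composite walks encoded by $(CS^tR)_{ij}$ have total length $t$ plus a nonnegative multiple of $\gamma$, and conversely an optimal length-$t$ walk decomposes into non-critical excursions and critical portions whose lengths are \emph{not} multiples of $\gamma$; reconciling the two requires controlling lengths modulo $\gamma$, which is exactly what the cyclic-classes machinery of~\cite{Ser-09} (or the Boolean-matrix periodicity arguments of~\cite{SS-12}) is for. Saying that extra $m\gamma$ steps ``can be absorbed'' in the middle segment, or that the middle portion ``can be shifted by multiples of $\gamma$,'' asserts the conclusion of that machinery rather than proving it: one must show, for instance, that from any critical node one can return to it by a critical walk of every sufficiently large length divisible by $\gamma$, and that the residues of the prefix and suffix lengths modulo $\gamma$ can always be compensated inside $\crit(A)$ without loss of weight. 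Relatedly, the decomposition of an optimal walk into ``short prefix, long critical middle, short suffix'' is too coarse: an optimal walk may leave and re-enter $\crit(A)$ many times, and the spectral-gap argument only bounds the \emph{total} length of the non-critical portions, so the reduction to a single critical middle segment also needs justification. As it stands the proposal is a correct roadmap to the proof in~\cite{Ser-09,SS-12}, but the step that actually produces $T(A)$ and the equality (rather than two inequalities each holding only ``for $t$ large enough in a sense not yet quantified'') is missing.
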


This $CSR$ form of the Cyclicity Theorem~\cite{CDQV-83} was obtained
in~\cite{Ser-09}, see also\cite{SS-12}.

\subsection{Eigenproblem and Bellman equation}

Let $A\in\R^{n\times n}$. Vector $x\in\R^n$ is called a tropical
eigenvector of $A$ associated with $\lambda$ if it satisfies
$A\otimes x=\lambda\otimes x$ for some $\lambda$.
See~\cite{ABG-06,BCOQ,But:10,CG:79,GM:08,HOW:05} for general
references.

\begin{theorem}
\label{vorobyev}
Let $A\in\Rnn$, then the maximum cycle mean $\rho(A)$ is the unique tropical eigenvalue of $A$.
\end{theorem}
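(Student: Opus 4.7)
The plan is to prove the statement in two parts: uniqueness (any tropical eigenvalue must equal $\rad(A)$) and existence ($\rad(A)$ is indeed a tropical eigenvalue, realized by a column of a suitable Kleene star).

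For \emph{uniqueness}, suppose $A\otimes x=\lambda\otimes x$ for some $x\in\Rn$. Entrywise this reads $\max_j(a_{ij}+x_j)=\lambda+x_i$ for every $i$, so in particular $a_{ij}+x_j\leq\lambda+x_i$ for all $i,j$. I would pick an arbitrary cycle $(i_1,\ldots,i_k,i_1)$ in $\digr(A)$, write down the $k$ inequalities of the form $a_{i_{\ell}i_{\ell+1}}+x_{i_{\ell+1}}\leq\lambda+x_{i_{\ell}}$ along the cycle, and telescope: the $x$-terms cancel and dividing by $k$ yields $(a_{i_1i_2}+\cdots+a_{i_ki_1})/k\leq\lambda$. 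Taking the maximum over all cycles gives $\rad(A)\leq\lambda$. For the converse inequality, I would extract from the equality $\max_j(a_{ij}+x_j)=\lambda+x_i$ a function $j(i)$ attaining the maximum at each $i$; this defines a subdigraph in which every node has out-degree one, hence contains a cycle. The $\leq$ inequalities become equalities along that cycle, and telescoping again yields that the cycle mean equals $\lambda$; thus $\lambda\leq\rad(A)$.

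For \emph{existence}, set $B:=A^{(\rad(A))}$, so every entry is $a_{ij}-\rad(A)$ and $\rad(B)=0$. By the convergence criterion for~\eqref{kls-def}, the Kleene star $B^*$ is well-defined and equals $I\tplus B\tplus\cdots\tplus B^{n-1}$. I would pick a node $i$ lying on a critical cycle of $A$ (equivalently, on a zero-mean cycle of $B$) and consider the column $v:=B^*_{\cdot i}$. Using the identity $B^*=BB^*\tplus I$ from~\eqref{kls-prop}, we get $Bv=BB^*_{\cdot i}=B^*_{\cdot i}\tplus$ (the $i$th column of $-I$), and the point is to show that the diagonal term does not dominate: since $i$ is critical, there is a closed walk of any length through $i$ with total additive weight $0$ in $B$, so $(B^*)_{ii}=0$, which coincides with the $(i,i)$ entry of $I$. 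This gives $Bv=v$, i.e.\ $v$ is a $0$-eigenvector of $B$, and translating back yields $A\otimes v=\rad(A)\otimes v$. To make sure $v\in\Rn$ (finite entries), I would note that all entries of $B^*$ lie in $(-\infty,0]$ and that $i$ being on a cycle of $\digr(A)$ (which has all real entries, hence is strongly connected with respect to reachability) implies $(B^*)_{ji}>-\infty$ for all $j$; for a general reducible $A$ one would restrict to the strongly connected component of $i$, but with $a_{ij}\in\R$ everywhere this issue does not arise.

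The main obstacle is the existence part, and specifically justifying that $B^*_{\cdot i}$ is a genuine fixed point of $B$ for a critical $i$; the key trick is the observation $(B^*)_{ii}=0$ for critical $i$, which follows from the definition of $\rad(A)$ as a maximum of cycle means together with the fact that $i$ sits on a mean-zero cycle. The uniqueness argument is essentially bookkeeping on cycles once one spots the saturating subdigraph $i\mapsto j(i)$.
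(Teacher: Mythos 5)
Your proof is correct. Note that the paper does not actually prove Theorem~\ref{vorobyev}: it is quoted as a standard fact with references to the literature (Cuninghame-Green, Baccelli et al., Butkovi\v{c}, etc.), so there is no in-paper argument to compare against; what you give is the classical textbook proof, and both halves are sound. The uniqueness half (telescoping the saturation inequalities around an arbitrary cycle for $\rho(A)\leq\lambda$, and around a cycle of the out-degree-one saturation subdigraph for $\lambda\leq\rho(A)$) is exactly right, and it correctly uses that $x\in\R^n$ has finite entries so the $x$-terms cancel. In the existence half, the one place where your wording is slightly off is the claim that the key fact is $(B^*)_{ii}=0$: that holds for \emph{every} $i$ (it is the contribution of $I$) and is not by itself what you need. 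What you actually need, and what your walk argument does deliver, is that $(BB^*)_{ii}=\bigoplus_{k\geq 1}(B^k)_{ii}=0$ for a critical node $i$, because $i$ lies on a zero-weight closed walk of some positive length $\ell$, giving $(B^{\ell})_{ii}=0$, while $\rho(B)=0$ forces all $(B^k)_{ii}\leq 0$. With that, $v=B^*_{\cdot i}$ satisfies $v=Bv\oplus e_i$ with $(Bv)_i=0=(e_i)_i$, hence $Bv=v$, and finiteness of $v$ is immediate since $A$ (hence $B$) has all real entries. So the argument goes through; just state the diagonal identity for $BB^*$ rather than $B^*$.
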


The set of all eigenvectors of $A$ with eigenvalue $\rho(A)$ is
denoted by $V(A,\rho)$ and called the {\bf eigencone} of $A$. It can
be described in terms of the critical graph, by the following
procedure. The critical graph of $\aminre$, the matrix with entries
$a_{ij}-\rho(A)$, consists of several strongly connected components,
isolated from each other.  In each of the components, one selects an
arbitrary index $i$ and picks the column $(\aminre)^*_{\cdot i}$ of
the Kleene star. It can be shown that for any other choice of an
index $i'$ in the same strongly connected component, the column will
be ``proportional'', i.e. $(\aminre)^*_{\cdot
i'}=\alpha\otimes(\aminre)^*_{\cdot i}=\alpha+\aminre)^*_{\cdot i}$.
In what follows, by $\Tilde{N}_C(A)$ we denote an index set
containing exactly one index from each strongly connected component
of $\crit(A)$. The following (standard) description of $V(A,\rho)$
is standard, see Krivulin~\cite{Kri-06,Kri:09}.

\if{
\begin{theorem}
\label{t:fundvecs}
Vectors $v^{(1)},\ldots,v^{(l)}$ are eigenvectors of $A$. Moreover,
\begin{equation}
\label{e:fundvecs}
V(A)=\{\bigtplus_{i=1}^l \kappa_i\otimes v^{(i)}\},
\end{equation}
and vectors $v^{(1)},\ldots,v^{(l)}$ are independent, i.e., none of
them can be expressed as a max combination of the remaining ones.
\end{theorem}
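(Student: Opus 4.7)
The plan is to identify the generators as the critical columns $v^{(i)} := (\aminre)^*_{\cdot i}$ for $i$ ranging over the index set $\Tilde{N}_C(A)$ of representatives of the strongly connected components of $\crit(A)$, so that $l$ equals the number of such components. I would verify three things in turn: each $v^{(i)}$ is an eigenvector of $A$ for eigenvalue $\rad(A)$; every eigenvector is a tropical combination of the $v^{(i)}$; and the family $\{v^{(i)}\}$ is tropically independent. Throughout I would work with $\aminre$, for which $\rad(\aminre) = 0$ and the eigenproblem reads $\aminre \otimes x = x$.

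For the first claim, I would start from~\eqref{kls-prop} applied to $\aminre$, namely $(\aminre)^* = \aminre(\aminre)^* \tplus I$, and restrict to the $i$-th column: $v^{(i)} = \aminre v^{(i)} \tplus I_{\cdot i}$. The $i$-th entry of $\aminre v^{(i)}$ equals the maximum additive weight of a closed walk through $i$ in $\aminre$, which is $\leq 0$ because $\rad(\aminre) = 0$ and $\geq 0$ because $i$ lies on a critical cycle; hence this entry is exactly $0$, matching $I_{\cdot i}$ at $i$, so the identity term is absorbed and $v^{(i)} = \aminre v^{(i)}$, equivalently $A \otimes v^{(i)} = \rad(A) \otimes v^{(i)}$.

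For the second claim, given an eigenvector $x$ of $A$ at $\rad(A)$, i.e.\ $\aminre \otimes x = x$, iteration yields $(\aminre)^t \otimes x = x$ for every $t$. Applying Theorem~\ref{CSR} to $\aminre$ gives $(\aminre)^t = CS^tR$ for $t \geq T(\aminre)$, so $x = C(S^tRx)$, which exhibits $x$ as a tropical linear combination of the columns of $C$. These are the critical columns of $(\aminre)^*$, and within each strongly connected component of $\crit(A)$ they differ from the chosen representative column $v^{(i)}$ only by a tropical scalar, so the combination collapses to one in the $v^{(i)}$'s.

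The delicate step, and the main obstacle, is independence. Suppose for contradiction that $v^{(i)} = \bigtplus_{j \in \Tilde{N}_C(A),\, j \neq i} \alpha_j v^{(j)}$. Reading off coordinate $i$ gives $0 = \bigtplus_j (\alpha_j + (\aminre)^*_{ij})$, so some $j^* \neq i$ attains $\alpha_{j^*} = -(\aminre)^*_{ij^*}$. Reading off coordinate $j^*$ and using $(\aminre)^*_{j^*j^*} = 0$ at the critical node $j^*$ yields $(\aminre)^*_{j^*i} \geq \alpha_{j^*}$, whence $(\aminre)^*_{ij^*} + (\aminre)^*_{j^*i} \geq 0$. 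But the left-hand side is at most the maximum additive weight of a closed walk through both $i$ and $j^*$ in $\aminre$, hence $\leq 0$ by $\rad(\aminre)=0$; so this sum equals $0$, meaning a closed walk of weight $0$ — and therefore critical — passes through both nodes, forcing $i$ and $j^*$ into the same strongly connected component of $\crit(A)$ and contradicting their choice as distinct representatives.
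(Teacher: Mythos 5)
The paper never proves this statement: it appears only as a quoted standard description of the eigencone (cf.\ Theorem~\ref{t:closed}, attributed to Krivulin and others), so your argument has to stand on its own. Two of your three steps do. The verification that each $v^{(i)}=(\aminre)^*_{\cdot i}$ is an eigenvector, via $(\aminre)^*=\aminre(\aminre)^*\oplus I$ and the observation that the $(i,i)$ entry of $\aminre(\aminre)^*$ equals $0$ for a critical node $i$, is correct. The independence argument is also sound, up to one line you should add: from $(\aminre)^*_{ij^*}+(\aminre)^*_{j^*i}\geq 0$ you get a closed walk through $i$ and $j^*$ of weight exactly $0$; since every closed walk decomposes into simple cycles and all cycles of $\aminre$ have weight $\leq 0$, each cycle in the decomposition has weight $0$ and is therefore critical, so every edge of the walk is critical and $i,j^*$ lie in one component of $\crit(A)$ --- that is the justification behind your phrase ``and therefore critical''.

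The genuine gap is in the completeness step. In Theorem~\ref{CSR} the matrix $C$ is extracted from the critical columns of $\bigl((A-\rad(A))^{\gamma}\bigr)^*$, \emph{not} of $(\aminre)^*$, and these are different objects: two critical columns of $((\aminre)^{\gamma})^*$ are proportional only when their indices lie in the same \emph{cyclic class} of a component of $\crit(A)$, not merely in the same component. For instance, if $\aminre=\left(\begin{smallmatrix}-1&0\\0&-1\end{smallmatrix}\right)$ then $\gamma=2$ and $((\aminre)^{2})^*=\left(\begin{smallmatrix}0&-1\\-1&0\end{smallmatrix}\right)$, whose two columns are not proportional even though nodes $1,2$ form a single critical component. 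Consequently $x=C(S^tRx)$ only exhibits $x$ as a tropical combination of generators of the eigencone of $(\aminre)^{\gamma}$, which is in general strictly larger than the eigencone of $\aminre$, and your claimed collapse onto the $v^{(i)}$ does not follow from proportionality. The standard repair bypasses CSR: from $\aminre x=x$ one has $x=(\aminre)^*x\geq x_j(\aminre)^*_{\cdot j}$ for every $j$, and for the reverse inequality one uses the saturation digraph of $x$ (edges $(i,j)$ with $(\aminre)_{ij}+x_j=x_i$): every node has an outgoing saturated edge, every saturated cycle is critical, hence every $i$ reaches a critical node $j$ along a saturated path, giving $x_i=(\aminre)^*_{ij}+x_j$ and thus $x=\bigoplus_{j}x_j(\aminre)^*_{\cdot j}$ with $j$ running over critical nodes only; this sum then does collapse onto the $v^{(i)}$ by proportionality within components. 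If you insist on the CSR route, you would instead need to combine $\gamma$ consecutive powers, $x=\bigoplus_{s=0}^{\gamma-1}CS^{T+s}Rx$, and show that this union reassembles the cyclic classes of each critical component into a single column of $(\aminre)^*$.
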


Due to this result, vectors $v^{(1)},\ldots, v^{(l)}$ are called the {\bf fundamental eigenvectors} of $A$.

In particular, the eigencone of $A$ admits the following algebraic representation.
}\fi

\begin{theorem}
\label{t:closed}
Let $A\in\R^{n\times n}$. Then
$V(A,\rho)$ consists of all vectors $(\aminre)^* z$, where $z$ is any vector in $\Rmax^n$
satisfying
\begin{equation}
\label{zdef0}
 i\notin \Tilde{N}_C(A,\lambda)\Rightarrow z_i=-\infty
\end{equation}
\end{theorem}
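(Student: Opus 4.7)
My approach is to normalise by setting $B=\aminre$, so that $V(A,\rho)=\{x\in\R^n : Bx=x\}$ and $\rho(B)=0$; all subsequent computations are performed with $B$ and $B^*$ in place of $A$ and $(\aminre)^*$.

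\textbf{Forward inclusion.} I first verify that each column $B^*_{\cdot i}$ with $i\in\crit(B)$ is a $0$-eigenvector of $B$. The identity $B^*=BB^*\oplus I$ gives $BB^*_{\cdot i}\leq B^*_{\cdot i}$ automatically; since $i$ lies on a critical cycle of weight $0$, some $(B^m)_{ii}=0$, so the $i$-th entry of $BB^*_{\cdot i}$ attains $0=B^*_{ii}$, and equality in the remaining coordinates follows from the optimal-walk interpretation of the entries of $B^*$. Any vector $B^*z$ with $\supp(z)\subseteq\Tilde{N}_C(A,\lambda)$ is then a max-combination of $0$-eigenvectors of $B$, hence an eigenvector, which gives the $\supseteq$ direction.

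\textbf{Reverse inclusion.} Let $x\in V(B,0)$. Iterating $Bx=x$ yields $x=B^*x=\bigoplus_j x_jB^*_{\cdot j}$. Set $y=\bigoplus_{i\in\Tilde{N}_C(A,\lambda)} x_iB^*_{\cdot i}$. The inequality $y\leq x$ is immediate since each $x_iB^*_{\cdot i}\leq B^*x=x$. For the harder direction $x\leq y$, fix a coordinate $k$ and use $x=B^tx$ with $t\geq n$, so that the maximising index $j^*$ in $x_k=\bigoplus_j x_j+(B^t)_{kj}$ is joined to $k$ by an \emph{optimal} walk of length $t$. Pigeonhole produces a repeated node, and the optimality of the walk combined with $\rho(B)=0$ forces the enclosed cycle to have weight $0$, placing it in $\crit(B)$ and furnishing a critical node $i$ through which the walk passes. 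Splitting the walk at $i$ and combining $x_i\geq x_{j^*}+B^*_{ij^*}$ (which is the $i$-th coordinate of $B^*x=x$) with the bounds on the two sub-walk weights, I obtain $x_k\leq x_i+B^*_{ki}$. Finally I replace $i$ by its representative $i'\in\Tilde{N}_C(A,\lambda)$ via the proportionality $B^*_{\cdot i}=B^*_{\cdot i'}+B^*_{i'i}$ stated just before the theorem, together with $x_i+B^*_{i'i}\leq x_{i'}$ (again from $B^*x=x$) and the relation $B^*_{ii'}+B^*_{i'i}=0$ valid within a critical SCC, to conclude $x_k\leq x_{i'}+B^*_{ki'}\leq y_k$.

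\textbf{Main obstacle.} The delicate step is the pigeonhole-plus-optimality argument showing that every sufficiently long maximising walk enters the critical graph; this is what connects the algebraic fixed-point relation $B^*x=x$ to the combinatorial structure of $\crit(B)$ and lets me extract a critical landing node $i$. The remaining ingredients are direct manipulations with $B^*$ and the proportionality within a critical strongly connected component, both already available.
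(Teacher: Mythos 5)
Your proof is correct. Note that the paper itself gives no proof of Theorem~\ref{t:closed}: it is stated as a standard result with a citation to Krivulin, so there is nothing internal to compare against. Your argument is the classical one for describing the eigencone via critical columns of the Kleene star, and all the key steps check out: the normalisation $B=\aminre$ with $\rho(B)=0$; the forward inclusion via $B^*=BB^*\oplus I$ together with $(B^m)_{ii}=0$ for critical $i$ and the walk interpretation of $B^*$; and the reverse inclusion via $x=B^tx$ for $t\geq n$, where cycle deletion (using $x=B^{t-|C|}x$ as well) shows any inserted cycle must have weight exactly $0$, hence be critical, so that the optimal walk passes through $\crit(B)$ and one gets $x_k\leq x_i+B^*_{ki}$ by splitting at the critical node and invoking $x_i\geq x_{j^*}+B^*_{ij^*}$. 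The final reduction to a representative $i'\in\Tilde{N}_C$ via the proportionality of critical columns within one strongly connected component is exactly the fact the paper records just before the theorem. The only (cosmetic) caveat is that $z$ must not be identically $-\infty$ if one insists on eigenvectors in $\R^n$, a point the theorem statement itself glosses over.
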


The tropical spectral theory can be further applied to the
equation
\begin{equation}
\label{e:bellman}
x=Ax\tplus b,
\end{equation}
which has been studied, e.g., in~\cite{Car-71}, \cite{BCOQ},
\cite{LM-98}. It is called Bellman equation due to its relations
with dynamic optimization on graphs and in particular, the Bellman
optimality principle~\cite{LM-98}.  Its nonnegative analogue is
known as $Z$-matrix equation, see~\cite{BSS-12}.

We will make use of the following basic result, formulated only recently in~\cite{BSS-12,Kri-06,Kri:09}.
The proof is given for the reader's convenience.
We consider only the case when $A$ and
$b$ have real entries, since this is the only case that we will encounter.
The solution set of~\eqref{e:bellman} will be denoted by $S(A,b)$.

\begin{theorem}
\label{schneider}
Let $A\in\R^{n\times n}$ and $b\in\R^n$. Equation~\eqref{e:bellman}
has nontrivial solutions if and only if $\rad(A)\leq 0$. In this case,
\begin{equation}
\label{sab-descr}
S(A,b) = \{A^*b\tplus v\colon Av=v\}
\end{equation}
In particular, $A^*b$ is the only solution if $\rad(A)<0$.
\end{theorem}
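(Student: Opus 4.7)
I will treat the two implications separately and then verify the description of the solution set.

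\emph{Necessity of $\rho(A)\leq 0$ and existence of $A^*b$.} First I would iterate the Bellman equation: any solution $x$ of $x=Ax\oplus b$ satisfies
\begin{equation*}
x= A^m x\oplus (I\oplus A\oplus\cdots\oplus A^{m-1})b\quad\text{for every }m\geq 1.
\end{equation*}
If there existed a cycle $(i_1,\ldots,i_k,i_1)$ in $\digr(A)$ of positive mean $\mu$, then $(A^{mk})_{i_1i_1}\geq mk\mu$, so the $i_1$-th component of the iterate would force $x_{i_1}\geq mk\mu+x_{i_1}$ for all $m$, contradicting the finiteness of $x_{i_1}$ (which holds because $x\geq b$ and $b$ is real). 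Hence $\rho(A)\leq 0$. Conversely, when $\rho(A)\leq 0$ the Kleene star $A^*$ is finite, and the identity $A^*=AA^*\oplus I$ from~\eqref{kls-prop} gives immediately
\begin{equation*}
A(A^*b)\oplus b=AA^*b\oplus b=(AA^*\oplus I)b=A^*b,
\end{equation*}
so $A^*b\in S(A,b)$.

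\emph{Inclusion $\supseteq$ in~\eqref{sab-descr}.} For $x=A^*b\oplus v$ with $Av=v$, the same calculation yields
\begin{equation*}
Ax\oplus b=AA^*b\oplus Av\oplus b=AA^*b\oplus b\oplus v=A^*b\oplus v=x.
\end{equation*}

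\emph{Inclusion $\subseteq$ in~\eqref{sab-descr}.} Let $x\in S(A,b)$. Since $x=Ax\oplus b\geq Ax$, the sequence $\{A^m x\}_{m\geq 0}$ is componentwise non-increasing, and iteration as above gives $x=A^m x\oplus A^*b$ for every $m\geq n$. I now split on $\rho(A)$. If $\rho(A)<0$, then by Theorem~\ref{CSR} every entry of $A^m$ tends to $-\infty$, hence $A^m x\to -\infty$ componentwise; taking the limit in $x=A^m x\oplus A^*b$ gives $x=A^*b$, and one sets $v=(-\infty,\ldots,-\infty)$, for which $Av=v$ trivially. If $\rho(A)=0$, the cyclicity Theorem~\ref{CSR} supplies a transient time $T(A)$ and a period $\gamma$ such that $A^{m+\gamma}=A^m$ for $m\geq T(A)$, so $A^m x=A^{m+\gamma}x$; combined with the decreasing chain $A^m x\geq A^{m+1}x\geq\cdots\geq A^{m+\gamma}x$, all these vectors coincide, and in particular $A^{m+1}x=A^m x$ for $m\geq T(A)$. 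Setting $v:=A^{T(A)}x$ we obtain $Av=v$, and the identity $x=A^{T(A)}x\oplus A^*b$ becomes $x=v\oplus A^*b$, as required.

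\emph{Anticipated obstacle.} The non-trivial step is the $\subseteq$ inclusion when $\rho(A)=0$: without invoking the cyclicity theorem one cannot see why the monotone orbit $\{A^m x\}$ eventually becomes stationary, and this stationarity is precisely what produces the eigenvector component $v$. Everything else reduces to direct manipulation of $A^*=AA^*\oplus I$ and iteration of the Bellman equation.
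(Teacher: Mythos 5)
Your proof is correct and follows essentially the same route as the paper's: iterate the equation to obtain $x=A^mx\oplus A^*b$, verify the reverse inclusion via $A^*=AA^*\oplus I$, and use the monotonicity $x\geq Ax\geq A^2x\geq\cdots$ together with the cyclicity theorem to split the cases $\rho(A)<0$ (orbit decays, so $x=A^*b$) and $\rho(A)=0$ (periodic plus monotone forces stabilization to an eigenvector $v$). The only cosmetic difference is that you make the necessity of $\rho(A)\leq 0$ explicit via a positive-mean cycle, where the paper simply appeals to convergence of $A^*$.
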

\begin{proof} First it can be verified that any vector like on the r.h.s.
of~\eqref{sab-descr} satisfies $x=Ax\tplus b$, using that
$A^*=AA^*\tplus I$ \eqref{kls-prop}.

Iterating the equation $x=Ax\tplus b$ we obtain
\begin{equation}
\begin{split}
x=Ax\tplus b= &A(Ax\tplus b)\tplus b=\ldots\\
=& A^k x \tplus (A^{k-1}\tplus\ldots\tplus I)b= A^k x\tplus A^*b
\end{split}
\end{equation}
for all $k\geq n$.

This implies $x\geq A^*b$. In particular, a solution of~\eqref{e:bellman} exists if and only if
$A^*$ converges, that is, if and only if $\rad(A)\leq 0$.

Further, $x$ satisfies $Ax\leq x$, hence
$x\geq Ax\geq\ldots\geq A^kx\geq\ldots$.

If $\rad(A)<0$, then by the cyclicity theorem, vectors of
$\{A^kx\}_{k\geq 1}$ start to fall with the constant rate $\rad(A)$,
starting from some $k$. This shows that for large enough $k$,
$A^kx\leq A^*b$ and $A^*b$ is the only solution.

If $\rad(A)=0$, then the orbit $\{A^kx\}_{k\geq 1}$ starts to cycle
from some $k$. But as $Ax\leq x$, we have $x\geq Ax\geq\ldots\geq
A^kx\geq\ldots$, and it is only possible that the sequence
$\{A^kx\}_{k\geq 1}$ stabilizes starting from some $k$. That is,
starting from some $k$, vector $v=A^k x$ satisfies $Av=v$. The proof
is complete.
\end{proof}

We will rather need the following formulation of the above result, implied by
Theorem~\ref{t:closed}.

\begin{corollary}
\label{c:bellclosed}
Let $A\in\R^{n\times n}$. Vector $x$ solves $x=Ax\oplus b$ if and only if it can be written
$x=A^*(b\oplus z(x))$ where $z(x)$ is a vector such that
\begin{equation}
\label{zdef00}
i\notin \Tilde{N}_C(A,0)\Rightarrow z(x)_i=-\infty.
\end{equation}
\end{corollary}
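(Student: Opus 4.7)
The plan is to combine Theorem~\ref{schneider} with Theorem~\ref{t:closed}, using the distributivity of tropical matrix multiplication over $\tplus$, namely $A^*(b\tplus z)=A^*b\tplus A^*z$. This reduces the corollary to a mechanical substitution of one description into the other.

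For the forward direction I would take $x$ with $x=Ax\tplus b$. Theorem~\ref{schneider} forces $\rad(A)\leq 0$ and yields a decomposition $x=A^*b\tplus v$ with $Av=v$, i.e. $v$ lies in the eigencone of $A$ at eigenvalue $0$. Applying Theorem~\ref{t:closed} with $\rho=0$ writes $v=A^*z(x)$ for some vector $z(x)$ satisfying $z(x)_i=-\infty$ for every $i\notin\Tilde{N}_C(A,0)$. Distributing $A^*$ over $\tplus$ gives $x=A^*b\tplus A^*z(x)=A^*(b\tplus z(x))$, which is the claimed form.

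For the converse, suppose $x=A^*(b\tplus z(x))$ with $z(x)$ satisfying~\eqref{zdef00}. Setting $v:=A^*z(x)$, Theorem~\ref{t:closed} says $v\in V(A,0)$, so $Av=v$. Using the Kleene-star identity $A^*=AA^*\tplus I$ from~\eqref{kls-prop}, one computes
\[
Ax\tplus b=A(A^*b\tplus v)\tplus b=AA^*b\tplus v\tplus b=(AA^*\tplus I)b\tplus v=A^*b\tplus v=x,
\]
so $x$ solves the Bellman equation.

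There is no substantial obstacle; the statement is essentially a formal repackaging of the two cited theorems together with distributivity. The only minor point I would flag is the degenerate case $\rad(A)<0$: here $\Tilde{N}_C(A,0)$ is empty, condition~\eqref{zdef00} forces $z(x)=-\infty$ componentwise, and the formula collapses to $x=A^*b$, consistent with the uniqueness clause in Theorem~\ref{schneider}.
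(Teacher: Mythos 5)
Your proof is correct and follows exactly the route the paper intends: the paper states Corollary~\ref{c:bellclosed} without proof, noting only that it is ``implied by Theorem~\ref{t:closed}'' on top of Theorem~\ref{schneider}, and your argument (substituting the eigencone description $v=A^*z$ into the decomposition $x=A^*b\tplus v$ and using distributivity of $A^*$ over $\tplus$) is precisely that implication, with the degenerate case $\rad(A)<0$ handled correctly.
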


\section{Max-{\L}ukasiewicz eigenproblem}

The problem is to find $\lambda$ such that there exist nonzero $x$ solving
$A\lmult x=\lambda\lmult x$. Using~\eqref{convert} we convert
this problem to the following one:
\begin{equation}
\label{mp-spectral}
\amine x\tplus\bzero =(\lambda-1)x\tplus\bzero,\quad 0\leq x_i\leq 1.
\end{equation}

Before developing any general theory, let us look at some
two-dimensional examples. Take
\begin{equation}
A=
\begin{pmatrix}
0.5 & 0.25\\
0.25 & 0.5
\end{pmatrix}
\end{equation}
Then $A\lmult x=\lambda\lmult x$ is equivalent to
\begin{equation}
\label{ex2:luk}
\begin{split}
\max(-0.5+x_1,\; -0.75+x_2,\; 0)&=\max(\lambda-1+x_1,\; 0)\\
\max(-0.75+x_1,\; -0.5+x_2,\; 0)&=\max(\lambda-1+x_2,\; 0)
\end{split}
\end{equation}

Take $\lambda<0.5$. Then the terms with $\lambda$ can be cancelled,
and we obtain the system of inequalities
\begin{equation}
\begin{split}
& \max(-0.5+x_1,\; -0.75+x_2)\leq 0\\
& \max(-0.75+x_1,\; -0.5+x_2)\leq 0,
\end{split}
\end{equation}
which has solution set
\begin{equation}
\label{e:solution1}
\bigl\{x\colon x_1\leq 0.5\ \wedge\  x_2\leq 0.5\bigr\}.
\end{equation}
This is the {\L}ukasiewicz eigenspace\footnote{In what follows, we
omit the prefix ``max-'' by the abuse of language.} associated with
any $\lambda<0.5$.

If $\lambda>0.5$, then the diagonal terms on the l.h.s.
of~\eqref{ex2:luk} can be cancelled and we obtain the system
\begin{equation}
\label{lambdabig}
\begin{split}
\max(-0.75+x_2,\; 0)&=\max(\lambda-1+x_1,\; 0)\\
\max(-0.75+x_1,\; 0)&=\max(\lambda-1+x_2,\; 0)
\end{split}
\end{equation}

The solutions to equations of~\eqref{lambdabig} can be written as
\begin{equation}
\begin{split}
\bigl\{x\colon &(x_2\leq 0.75\ \wedge\ x_1\leq 1-\lambda)\ \vee \\
              &(x_2\geq 0.75\ \wedge\ x_1\geq 1-\lambda\ \wedge\  x_2=\lambda-0.25+x_1)\;\bigr\}
\end{split}
\end{equation}
and, respectively,
\begin{equation}
\bigl\{x\colon (x_1\leq 0.75\  \wedge\  x_2\leq 1-\lambda)\  \vee\\
(x_1\geq 0.75\  \wedge\  x_2\geq 1-\lambda\  \wedge\  x_2=0.25-\lambda+x_1)\;\bigr\}
\end{equation}
Intersecting these sets we obtain the {\L}ukasiewicz eigenspace
\begin{equation}
\label{e:solution2}
\bigl\{x\colon x_1\leq 1-\lambda\ \wedge\  x_2\leq 1-\lambda\bigr\}.
\end{equation}

If $\lambda=0.5$ then we solve
\begin{equation}
\begin{split}
\max(-0.5+x_1,\;-0.75+x_2,\; 0)&=\max(-0.5+x_1,\; 0)\\
\max(-0.75+x_1,\;-0.5+x_2,\; 0)&=\max(-0.5+x_2,\; 0)\\
\end{split}
\end{equation}
which is equivalent to
\begin{equation}
\begin{split}
-0.75+x_2  &\leq\max(-0.5+x_1,\; 0)\\
-0.75+x_1  &\leq\max(-0.5+x_2,\; 0).
\end{split}
\end{equation}
The solution set is
\begin{equation}
\label{e:solution3}
\bigl\{x\colon (x_2\leq 0.25+x_1\ \vee\  x_2\leq 0.75)\  \wedge\
(x_2\geq -0.25+x_1\ \vee\  x_1\leq 0.75)\; \bigl\}.
\end{equation}

{\L}ukasiewicz eigenspaces~\eqref{e:solution1},~\eqref{e:solution2} and~\eqref{e:solution3} for $\lambda=0.1$, $\lambda= 0.5$ and $\lambda=0.8$ are
displayed on Figure~\ref{f:luk-eigs1}. 

\begin{figure}
 \begin{center}
\hspace{-10pt}
  \includegraphics[width=105pt]{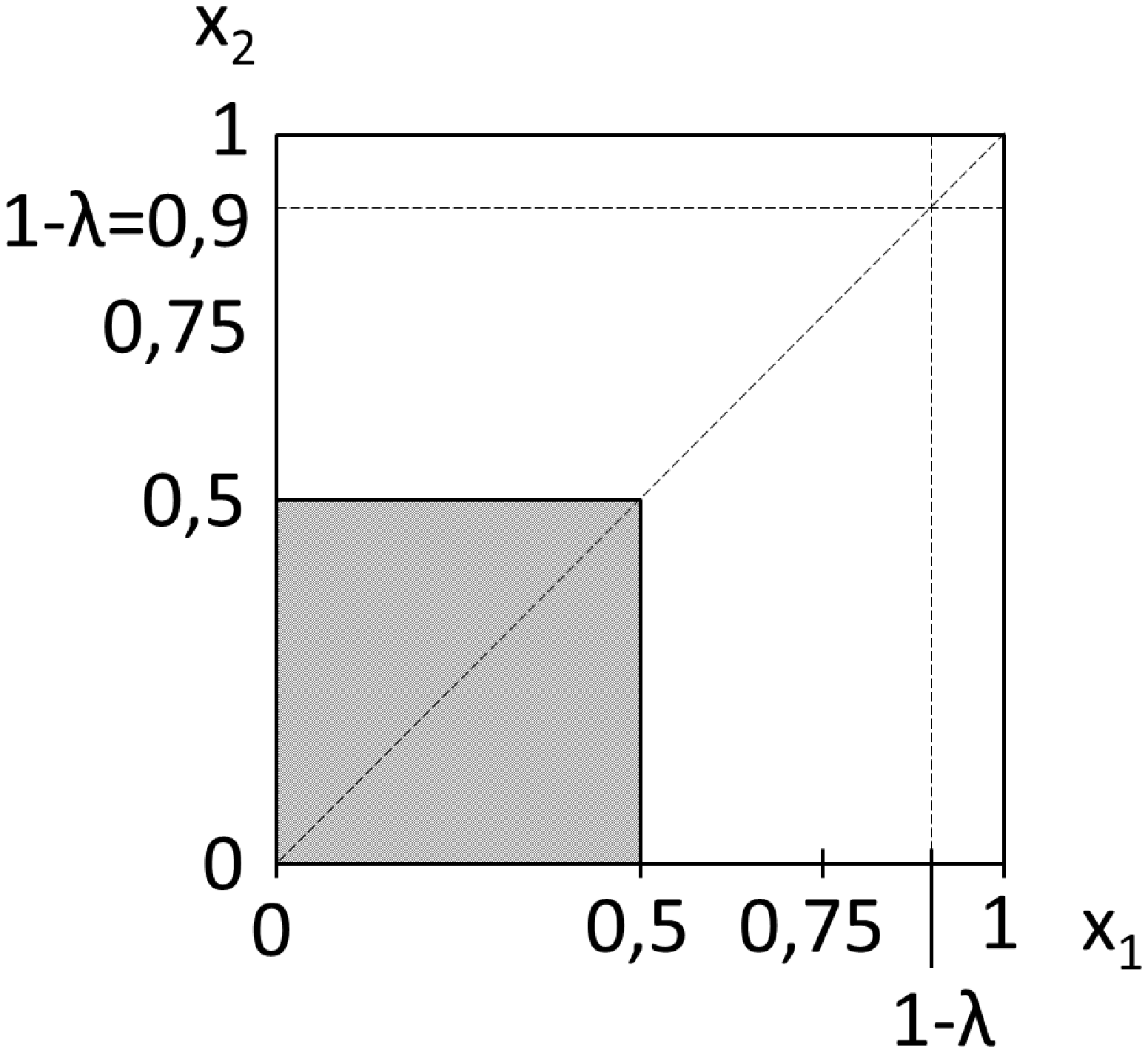}\hspace{15pt}
  \includegraphics[width=105pt]{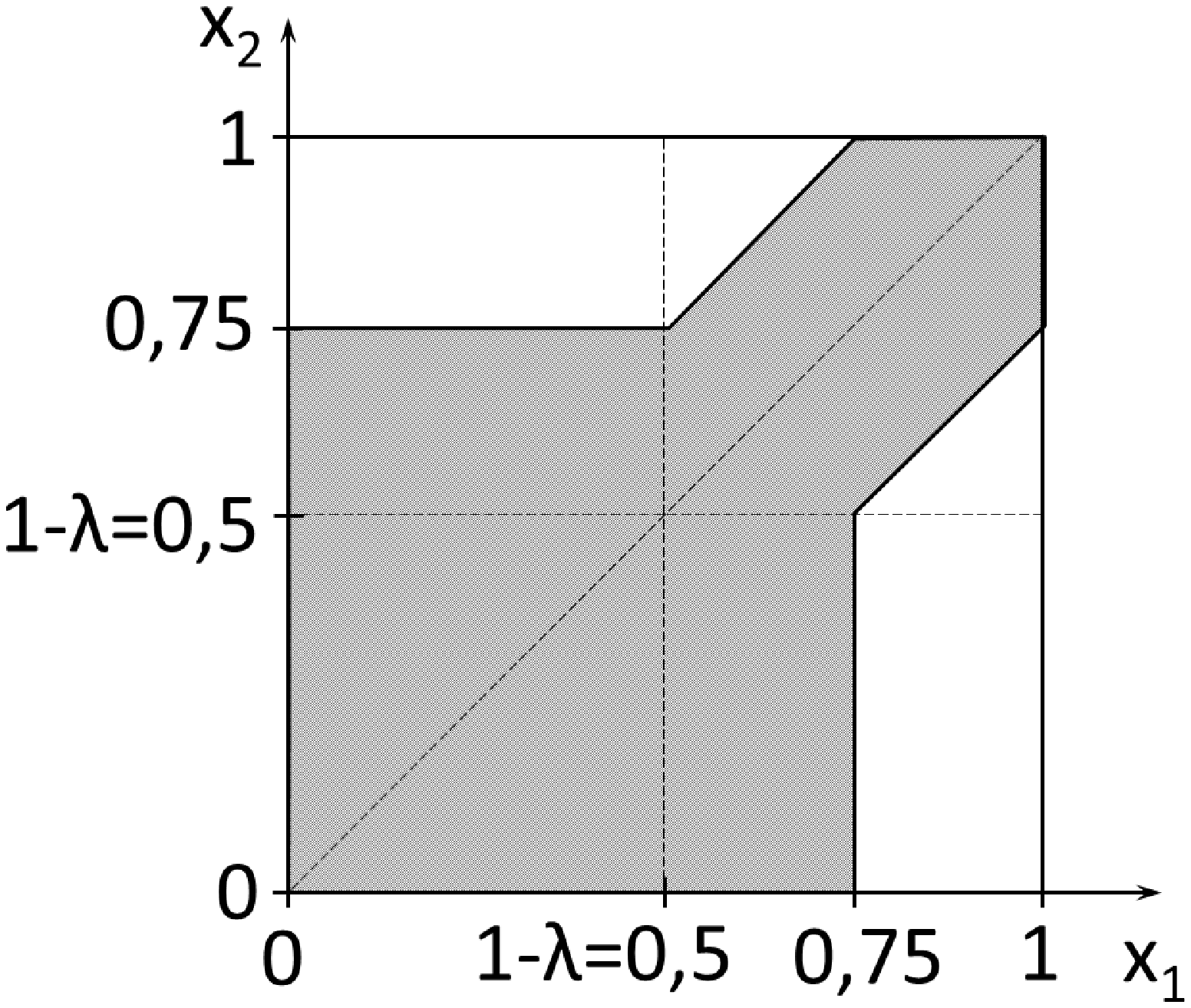}\hspace{15pt}
 \includegraphics[width=105pt]{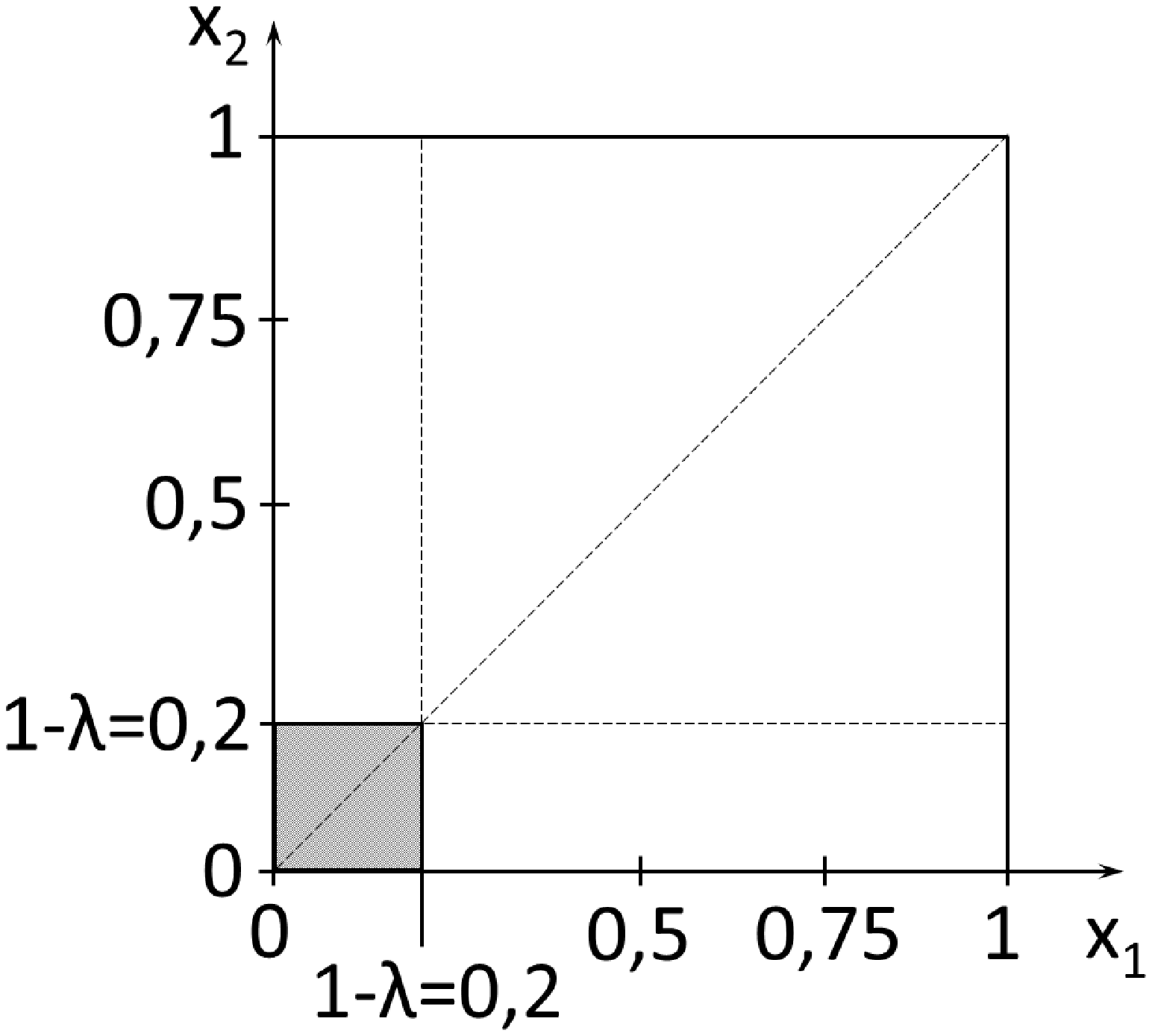}\\
  \caption{Cases $\lambda=0.1$, $\lambda= 0.5$ and $\lambda=0.8$}\label{f:luk-eigs1}
  \end{center}
\end{figure}


The relatively simple structure of the eigenspace in the previous example is influenced by the symmetry of the matrix $A$, which reduces the number of conditions that are to be considered.
Now we show another two-dimensional example, in which all entries of the matrix have different values
\begin{equation}
A=
\begin{pmatrix}
0.2 & 0.1\\
0.7 & 0.4
\end{pmatrix}
\end{equation}
There are five possible positions of the parameter $\lambda$ with respect to the diagonal entries $0.2$ and $0.4$, namely: $\lambda < 0.2$, $\lambda = 0.2$,  $0.2< \lambda  < 0.4$, $\lambda = 0.4$ and $\lambda > 0.4$.

For $\lambda < 0.4$,  i.e. in the first three cases, the solution set is
\begin{equation}
\label{e:solution123}
\bigl\{x\colon x_1\leq 0.3\  \wedge\  x_2\leq 0.6 \bigr\}\enspace.
\end{equation}
In fact, it is the solution set for the second equation, whereas solution sets for the first equation in cases $\lambda < 0.2$, $\lambda = 0.2$ and $0.2 < \lambda < 0.4$ are:
$\{x\colon x_1\leq 0.8\  \wedge\  x_2\leq 0.9 \}$,
$\{x\colon (x_1\leq 0.8 \  \wedge\  x_2\leq 0.9)\  \vee\
(x_1> 0.8 \  \wedge\  -0.1+x_2\leq x_1) \}$ and
$\{x\colon (x_1\leq 1-\lambda \  \wedge\  x_2\leq 0.9)\  \vee\
(x_1> 1-\lambda \  \wedge\  0.1-\lambda+x_2=x_1) \}$, respectively.
All these sets contain the solution set~\eqref{e:solution123}, hence the intersection of the solutions for both equations is described by~\eqref{e:solution123}.

For $\lambda = 0.4$ we obtain the solution sets for the first and the second equation
\begin{equation}
\begin{split}
&\bigl\{x\colon (x_1\leq 0.6 \  \wedge \  x_2\leq 0.9)\  \vee\
(x_1> 0.6 \  \wedge\  -0.3+x_2=x_1) \bigr \} \\
&\bigl\{x\colon (x_2\leq 0.6 \  \wedge\  x_1 \leq 0.3)\  \vee\
(x_2> 0.6 \  \wedge\  0.3+x_1 \leq x_2) \bigr\}
\end{split}
\end{equation}
and their intersection is the solution set
\begin{equation}
\label{e:solution4}
\begin{split}
\bigl\{x\colon &(x_1\leq 0.3\ \wedge\  x_2\leq 0.6)\  \vee\
(x_1 \leq 0.6\ \wedge\  0.6< x_2 \leq 0.9\ \wedge\ 0.3+x_1 \leq x_2)\ \vee\ \\
&(x_1>0.6 \ \wedge\  x_2>0.6\ \wedge\ 0.3+x_1= x_2)\bigr\}\enspace.
\end{split}
\end{equation}

For $\lambda > 0.4$ we obtain the solution sets for the equations
\begin{equation}
\begin{split}
&\bigl\{x\colon (x_1\leq 1-\lambda \  \wedge\  x_2\leq 0.9)\  \vee\
(x_1> 1-\lambda \  \wedge\  0.1-\lambda+x_2 = x_1)\bigr\} \\
&\bigl\{x\colon (x_2\leq 1-\lambda \  \wedge\  x_1 \leq 0.3)\  \vee\
(x_2> 1-\lambda \  \wedge\  0.7-\lambda+x_1 = x_2)\bigr\}\enspace.
\end{split}
\end{equation}
Their intersection (the solution set) for $0.4< \lambda <0.7$ has the form
\begin{equation}
\label{e:solution5a}
\begin{split}
\bigl\{x\colon &(x_1\leq 1-\lambda \  \wedge\  1- \lambda<x_2 \leq 0.9 \  \wedge\ 0.7- \lambda+x_1=x_2)\  \vee\ \\
&(x_1 \leq 0.3 \  \wedge\  x_2 \leq 1-\lambda)\bigr\}\enspace,
\end{split}
\end{equation}
while for $\lambda \geq 0.7$ the solution set is simply
\begin{equation}
\label{e:solution5b}
\bigl\{x\colon x_1 \leq 1-\lambda\  \wedge\  x_2 \leq 1- \lambda \bigr\}\enspace.
\end{equation}
All described eigenspaces
~\eqref{e:solution123},~\eqref{e:solution4},~\eqref{e:solution5a}
and~\eqref{e:solution5b} for $\lambda=0.35$, $\lambda= 0.4$,
$\lambda= 0.55$ and $\lambda=0.85$ are displayed on
Figure~\ref{f:luk-eigs2}.
\begin{figure}
 \begin{center}
\hspace{-10pt}
  \includegraphics[width=120pt]{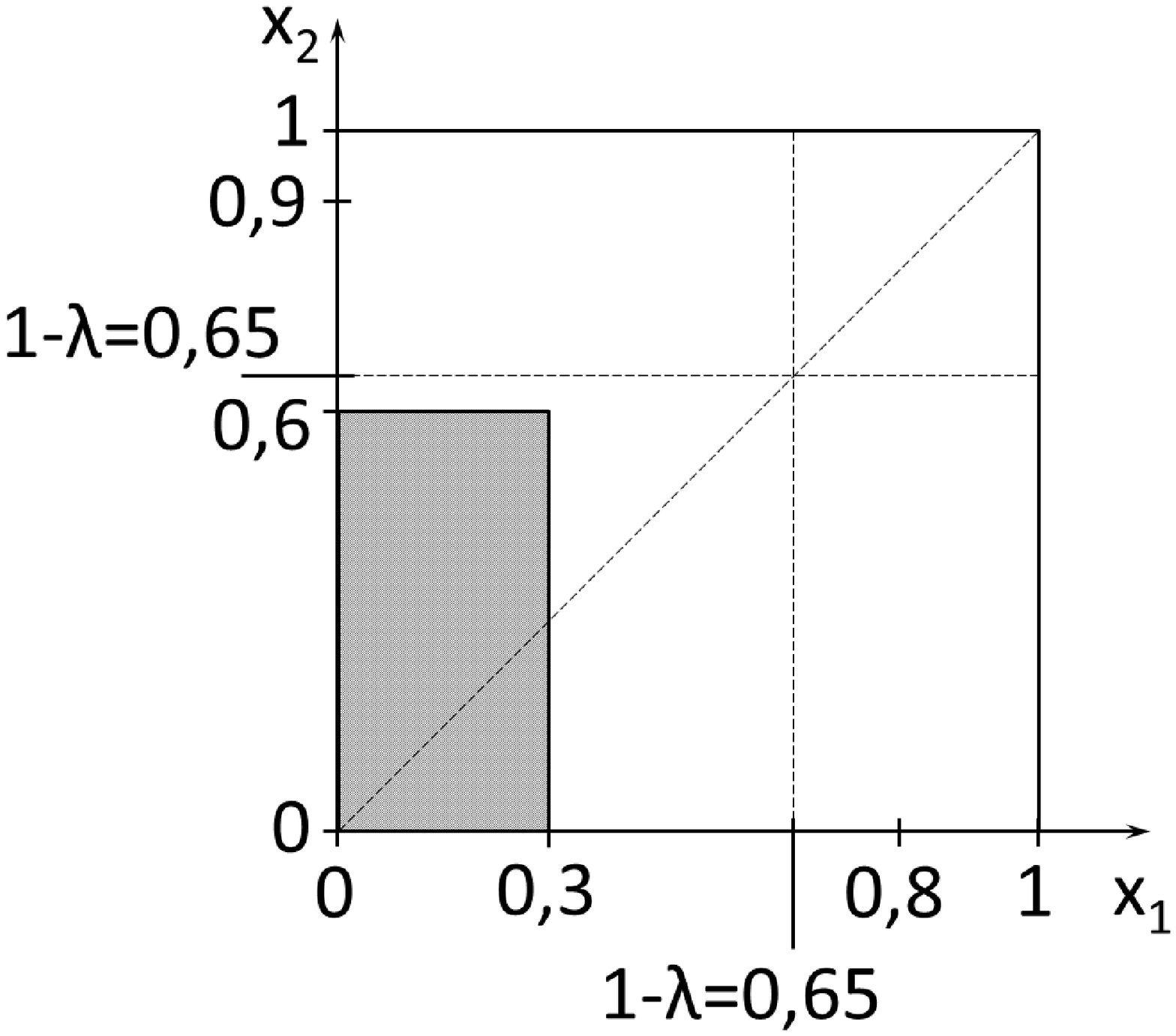}\hspace{-5pt}
  \includegraphics[width=120pt]{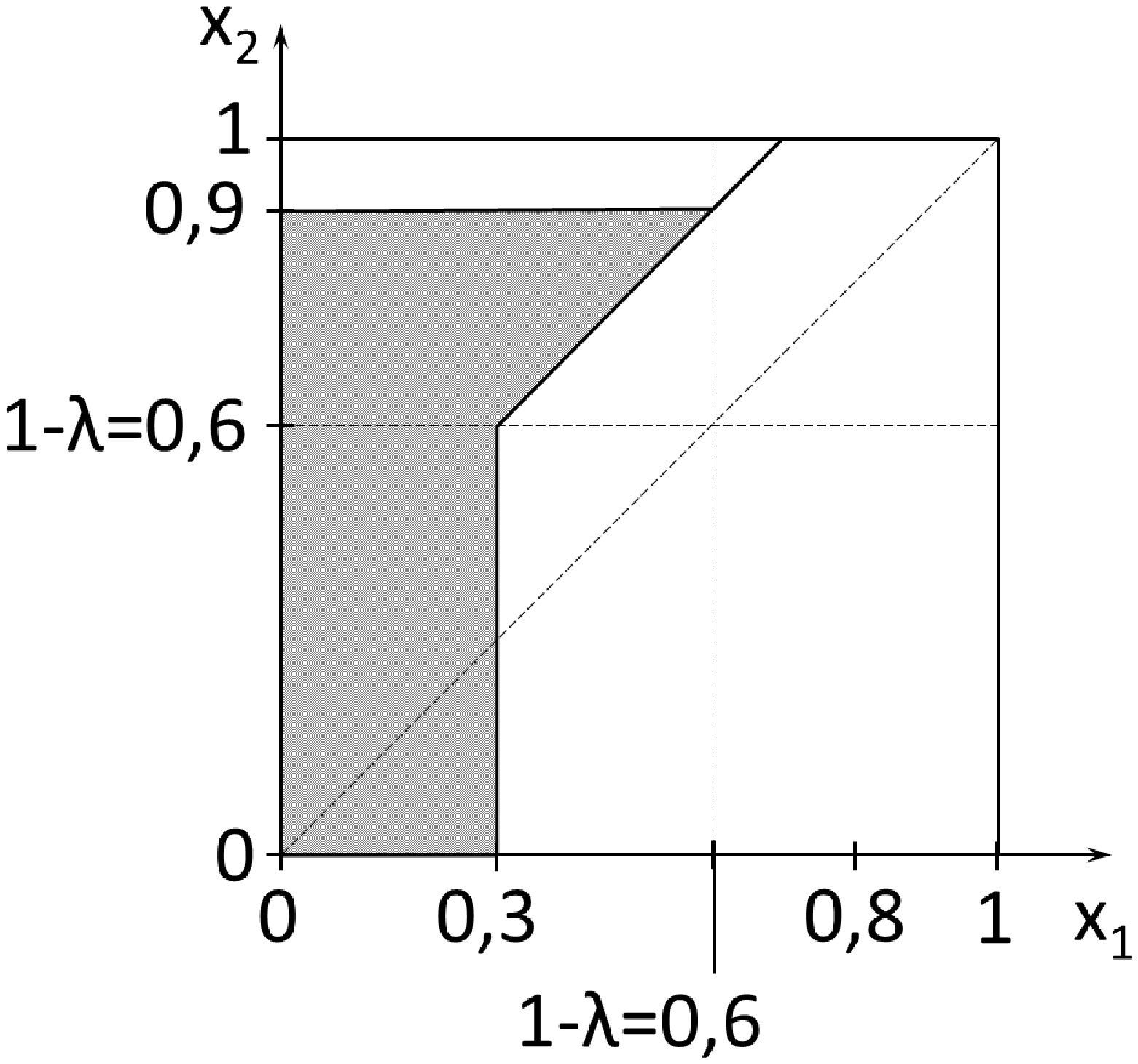}\hspace{-5pt}
  \includegraphics[width=120pt]{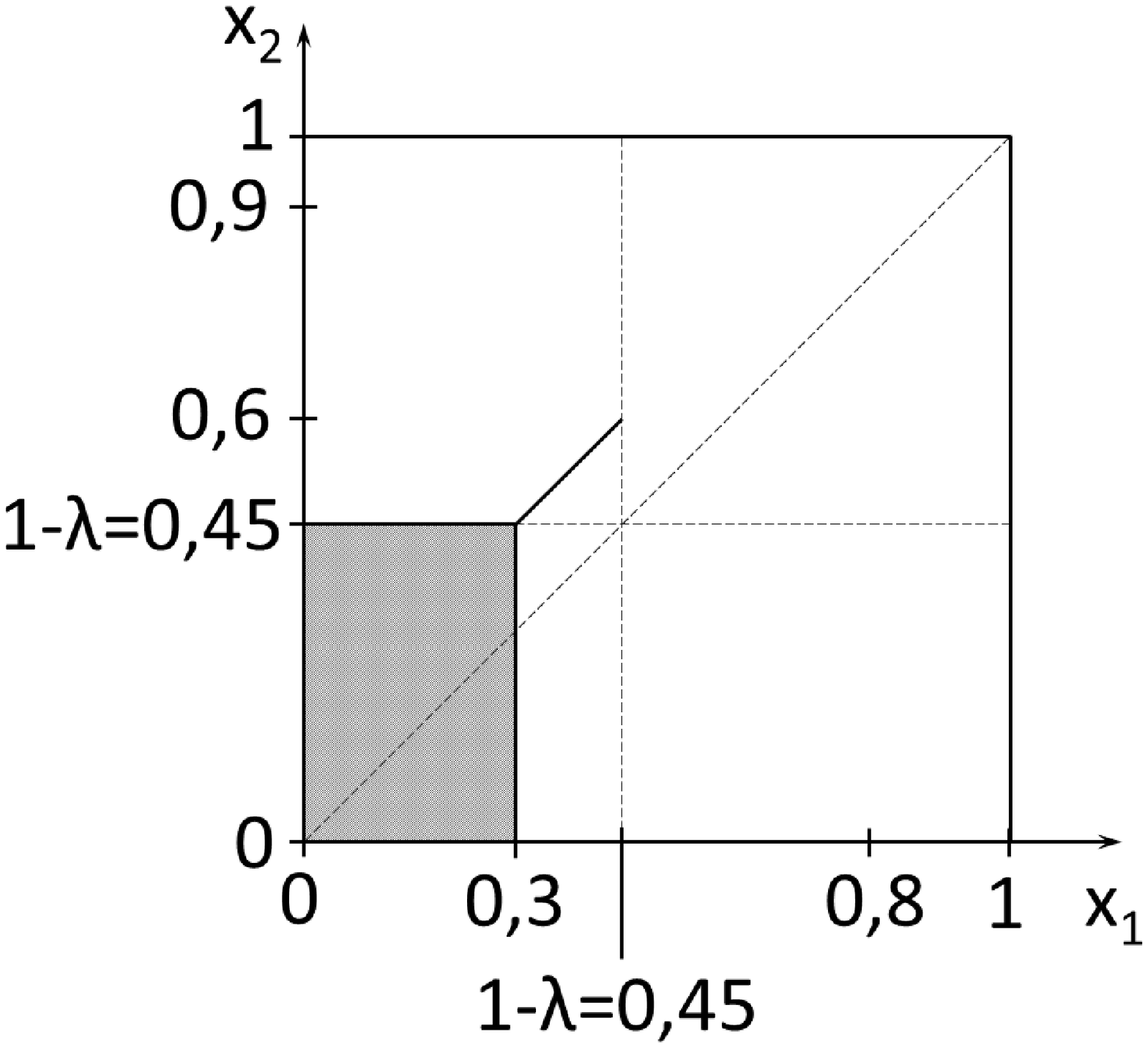}\hspace{-5pt}
 \includegraphics[width=120pt]{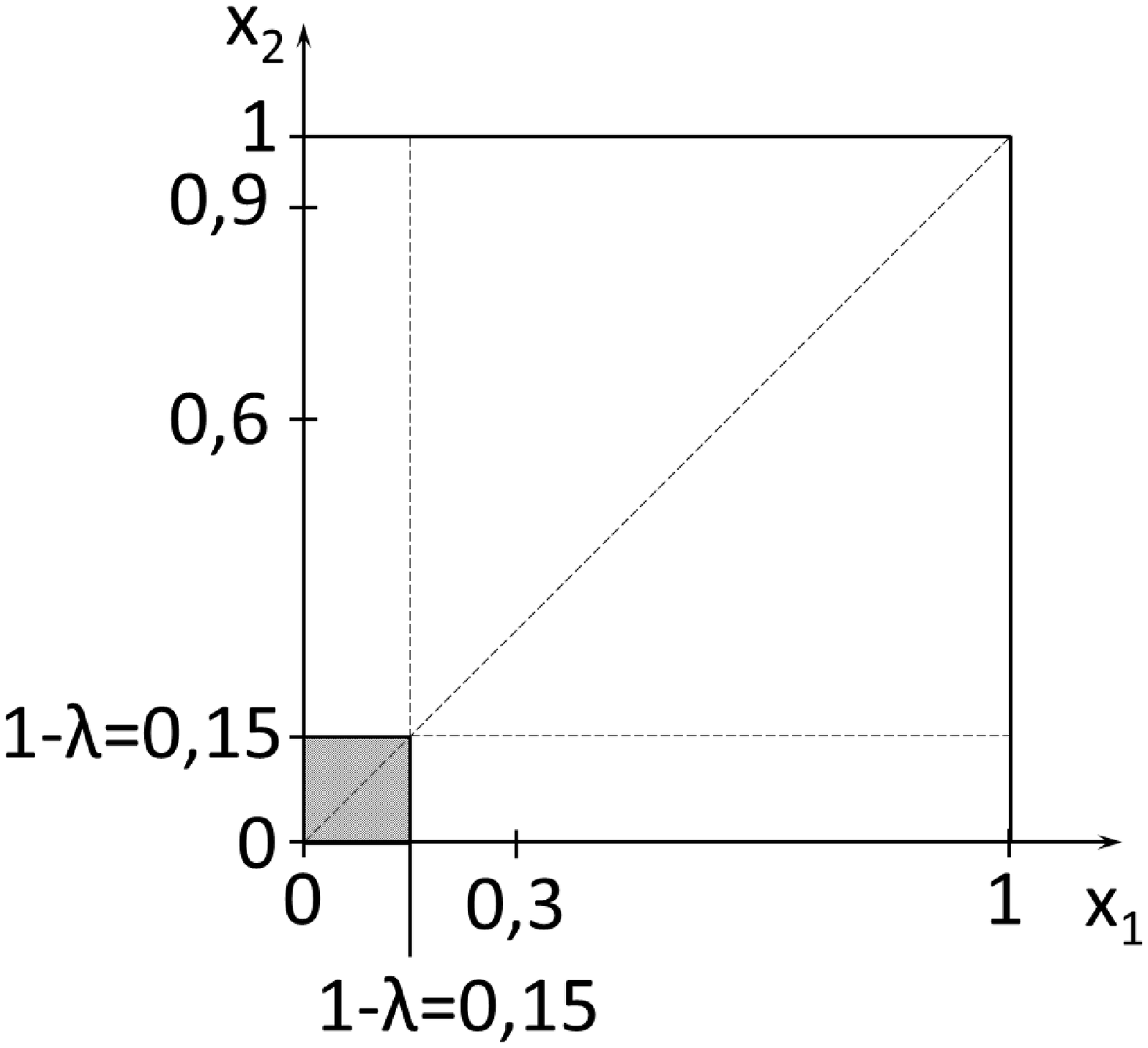}\\
  \caption{Cases $\lambda=0.35$, $\lambda= 0.4$, $\lambda= 0.55$ and $\lambda=0.85$}\label{f:luk-eigs2}
  \end{center}
\end{figure}


We now going to describe certain parts of the {\L}ukasiewicz
eigenspace, determined by the pattern of maxima on the r.h.s.
of~\eqref{mp-spectral}, whose generating sets (containing no more
than $n+1$ points) can be written explicitly and computed in
polynomial time.

\subsection{Background eigenvectors}

These are the eigenvectors that satisfy $x_i\leq 1-\lambda$ for all $i$. In this case~\eqref{mp-spectral} becomes
$\amine x\leq\bzero$, and using~\eqref{e:resid} we obtain that the solutions are given by
\begin{equation}
\label{backgr}
x_j\leq\min\left(1-\lambda,\min\limits_i (1-a_{ij})\right)
\end{equation}
If $\lambda=1$ then there are no nonzero background eigenvectors.\\
If $\lambda=0$ then all {\L}ukasiewicz eigenvectors are background.

If $\lambda<1$ then nonzero background eigenvectors exist if and
only if there is $j$ such that $\max\limits_i a_{ij}<1$. In other
words, background eigenvectors do not exist if and only if in each
column of $A$ there is an entry equal to $1$. It can be seen that in
this case $\rad(A)=1$.

In particular, we obtain the following.

\begin{proposition}
A matrix $A\in\unint^{n\times n}$ with $\rad(A)<1$ has nonzero
background {\L}ukasiewicz eigenvectors for all $\lambda<1$.
\end{proposition}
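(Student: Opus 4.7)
The strategy is to read off background eigenvectors directly from the explicit formula~\eqref{backgr} and then show that the column condition appearing there cannot fail when $\rho(A)<1$. Given $\lambda<1$, the formula $x_j\leq\min(1-\lambda,\min_i(1-a_{ij}))$ produces a nonzero vector precisely when there exists an index $j^*$ with $\min_i(1-a_{ij^*})>0$, i.e.\ $\max_i a_{ij^*}<1$: just set $x_{j^*}$ to any positive number below $\min(1-\lambda,\min_i(1-a_{ij^*}))$ and put the remaining coordinates to $0$. So the proposition reduces to showing that if $\rho(A)<1$ then some column of $A$ has all entries strictly less than $1$.

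The cleanest way to do this is to prove the contrapositive: if in every column $j$ there is at least one entry equal to $1$, then $\rho(A)\geq 1$ (this was actually already announced in the paragraph preceding the proposition, so the proof just has to make this observation rigorous). Since $A\in\unint^{n\times n}$ forces every cycle mean to be $\leq 1$ and hence $\rho(A)\leq 1$, combining with $\rho(A)\geq 1$ gives $\rho(A)=1$, contradicting the hypothesis.

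To establish $\rho(A)\geq 1$ under the column condition, define a map $\sigma\colon\{1,\ldots,n\}\to\{1,\ldots,n\}$ by choosing, for every column $j$, some row $\sigma(j)$ with $a_{\sigma(j),j}=1$. Since $\sigma$ is a self-map of a finite set, the sequence $j,\sigma(j),\sigma^2(j),\ldots$ must eventually enter a cycle; pick indices $j_1,\ldots,j_k$ with $\sigma(j_m)=j_{m+1}$ for $m<k$ and $\sigma(j_k)=j_1$. The edges $(j_2,j_1),(j_3,j_2),\ldots,(j_1,j_k)$ in $\digr(A)$ all have weight $1$ and, read in the reverse order $j_1\to j_k\to j_{k-1}\to\cdots\to j_2\to j_1$, form a cycle of length $k$. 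By the definition~\eqref{mcmdef} of the maximum cycle mean, $\rho(A)\geq k/k=1$, as required.

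No real obstacle arises: the only step that is not completely automatic is keeping the orientation conventions of $\digr(A)$ straight (the edge $(i,j)$ carries weight $a_{ij}$, so the selector $\sigma$ on columns has to be traversed backwards to produce a genuine directed cycle), but once that is done the pigeonhole-style argument on $\sigma$ finishes the proof immediately.
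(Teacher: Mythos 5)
Your proof is correct and takes essentially the same route as the paper: the paper reduces the proposition to the observation that nonzero background eigenvectors fail to exist (for $\lambda<1$) exactly when every column of $A$ contains an entry equal to $1$, and that in this case $\rho(A)=1$, a step it leaves with the remark ``it can be seen.'' Your selector-map/pigeonhole construction of a directed cycle of weight-$1$ edges is a correct and careful justification of precisely that omitted step, including the right orientation of the edges $(\sigma(j),j)$.
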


\subsection{Pure eigenvectors}

These are the vectors that satisfy $x_i\geq 1-\lambda$ for all $i$. In this case~\eqref{mp-spectral}
transforms to
\begin{equation}
\label{belleq-pure}
x= \aminle x\tplus(1-\lambda)\otimes\bzero,\quad 1-\lambda \leq x_i\leq 1.
\end{equation}
Evidently, the set of vectors satisfying~\eqref{belleq-pure} is tropically convex.

We obtain the following description of the pure eigenvectors.

\begin{theorem}
\label{t:pure}
Matrix $A\in\unint^{n\times n}$ has a pure eigenvector associated with $\lambda\in\unint$ if and only if
$\rho(A)\leq\lambda$ and $\max((\aminle)^*\bzero)\leq\lambda$. Vector $x\in\R^n$ is a pure {\L}ukasiewicz
eigenvector associated with $\lambda$ if and only if it is of the form
\begin{equation}
\label{xexpl}
x= \left(\aminle\right)^* \bigl((1-\lambda)\otimes\bzero\oplus z(x)\bigr)
\end{equation}
where $z(x)$ is a vector satisfying
\begin{equation}
\label{zconstr}
\begin{split}
z(x)\leq \left( \left(\aminle\right)^*\right)^{\sharp}\otimes'\bunity,\\
i\notin \Tilde{N}_C(A,\lambda)\Rightarrow z(x)_i=-\infty.
\end{split}
\end{equation}
In particular, $z(x)=-\infty$ whenever $\lambda\neq\rho(A)$.
\end{theorem}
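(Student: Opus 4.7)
The plan is to recast the pure eigenvector condition as a Bellman equation of the type handled in Corollary~\ref{c:bellclosed}, and then layer the upper bound $x\leq\bunity$ on top via the residuation law~\eqref{e:resid}. Equation~\eqref{belleq-pure} is exactly $x=\aminle x\oplus b$ with $b:=(1-\lambda)\otimes\bzero$. Applying Corollary~\ref{c:bellclosed} to the matrix $\aminle$ then tells us that a solution exists iff $\rho(\aminle)=\rho(A)-\lambda\leq 0$, i.e.\ $\rho(A)\leq\lambda$, and that in this case the solution set consists exactly of the vectors $(\aminle)^{*}(b\oplus z(x))$ with $z(x)_{i}=-\infty$ for $i\notin\Tilde{N}_{C}(A,\lambda)$. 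This is the form~\eqref{xexpl}.

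Next I would impose the box constraints $1-\lambda\leq x_{i}\leq 1$. The lower bound is automatic from $x\geq(\aminle)^{*}b\geq b$, because $(\aminle)^{*}\geq I$ (using~\eqref{kls-prop}). For the upper bound, residuation~\eqref{e:resid} rewrites
\[
(\aminle)^{*}(b\oplus z(x))\leq\bunity
\]
as $b\oplus z(x)\leq((\aminle)^{*})^{\sharp}\otimes'\bunity$, which decouples into (a) $(\aminle)^{*}b\leq\bunity$ and (b) $z(x)\leq((\aminle)^{*})^{\sharp}\otimes'\bunity$. Component (a) reads $(1-\lambda)+((\aminle)^{*}\bzero)_{i}\leq 1$ for every $i$, i.e.\ $\max((\aminle)^{*}\bzero)\leq\lambda$; this is precisely the second existence condition, and once it holds the choice $z(x)=-\infty$ always gives a valid pure eigenvector. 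Condition (b) is exactly the first line of~\eqref{zconstr}.

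For the last clause, if $\lambda>\rho(A)$ then $\rho(\aminle)<0$, and Theorem~\ref{schneider} asserts that $(\aminle)^{*}b$ is then the unique solution of $x=\aminle x\oplus b$, which corresponds to $z(x)=-\infty$; the case $\lambda<\rho(A)$ is already ruled out by the existence condition $\rho(A)\leq\lambda$. So indeed $z(x)=-\infty$ is forced whenever $\lambda\neq\rho(A)$.

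The main obstacle I anticipate is notational bookkeeping rather than any deep step. One must check that Corollary~\ref{c:bellclosed}, stated for a matrix with eigenvalue $0$, is being applied correctly to $\aminle$ (whose max cycle mean may be strictly negative), and that the critical-node set coming out of that application is the one denoted $\Tilde{N}_{C}(A,\lambda)$ in the statement; since $\crit(\aminle)$ coincides with $\crit(A)$ as a subgraph, this amounts to identifying conventions. One also has to verify that the residuation identity applies cleanly to the shifted Kleene star $(\aminle)^{*}$, whose entries are finite under the hypothesis $\rho(A)\leq\lambda$, so that the split of the upper-bound inequality into (a) and (b) is valid.
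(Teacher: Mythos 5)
Your proposal is correct and follows essentially the same route as the paper's proof: apply Corollary~\ref{c:bellclosed} to the Bellman equation~\eqref{belleq-pure}, observe the lower bound $x_i\geq 1-\lambda$ is automatic, and extract both the condition $\max((\aminle)^*\bzero)\leq\lambda$ and the first line of~\eqref{zconstr} by residuating $x\leq\bunity$. Your explicit decoupling of the residuated inequality into the two conditions, and your handling of the final clause via uniqueness when $\rho(\aminle)<0$, merely spell out steps the paper leaves implicit.
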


\begin{proof}
As any pure eigenvector is a solution to~\eqref{belleq-pure} we can
use Corollary~\ref{c:bellclosed}. We obtain that $x\in\R^n$ is a
pure eigenvector if and only if it can be represented as
in~\eqref{xexpl} with $z(x)$ satisfying the second constraint
in~\eqref{zconstr}, and has all coordinates between $1-\lambda$ and
$1$. From~\eqref{xexpl} we conclude that $x\geq
(1-\lambda)\otimes(\aminle)^*\bzero\geq (1-\lambda)\otimes\bzero$,
so $x_i\geq 1-\lambda$ holds for all $i$. Substituting~\eqref{xexpl}
in $x\leq\bunity$ and using the duality law~\eqref{e:resid} we
obtain the condition $\max((\aminle)^*\bzero)\leq\lambda$ and the
first constraint in~\eqref{zconstr}.
\end{proof}

\begin{definition}
\label{def:lsecure}
For $\lambda>0$, a weighted graph $\digr$ is called {\em $\lambda$-secure} if
for each node of $\digr$ and each walk $P$ issuing from that node, $-\lambda+w(P)\leq 0$.
\end{definition}

\begin{corollary}
\label{c:pure-secure}
A matrix $A\in\unint^{n\times n}$ has a pure eigenvector associated with $\lambda\in\unint$ if and only if
$\digr(A^{(\lambda)})$ is $\lambda$-secure.
\end{corollary}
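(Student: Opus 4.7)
The plan is to apply Theorem~\ref{t:pure} and reformulate its two algebraic conditions in purely graph-theoretic terms, using the optimal-walk interpretation of the Kleene star recalled in Section~\ref{ss:cyclicity}. According to Theorem~\ref{t:pure}, a pure eigenvector associated with $\lambda$ exists if and only if (a) $\rad(A)\leq\lambda$, so that $(\aminle)^*$ converges, and (b) $\max((\aminle)^*\bzero)\leq\lambda$; indeed the choice $z(x)=-\infty$ is always admissible, so the existence issue reduces to whether the ``background'' pure eigenvector $(1-\lambda)\otimes(\aminle)^*\bzero$ is bounded above by $\bunity$. So I only need to prove the equivalence
$$
\bigl(\rad(A)\leq\lambda\ \text{and}\ \max((\aminle)^*\bzero)\leq\lambda\bigr)\ \Longleftrightarrow\ \digr(\aminle)\ \text{is $\lambda$-secure.}
$$

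For the forward direction, assume (a) and (b). Since $\rad(\aminle)=\rad(A)-\lambda\leq 0$, the Kleene star $(\aminle)^*$ equals $I\tplus\aminle\tplus\ldots\tplus(\aminle)^{n-1}$, and by the optimal-walk interpretation, for each pair $(i,j)$ the entry $((\aminle)^*)_{ij}$ is the maximum additive weight of a walk from $i$ to $j$ in $\digr(\aminle)$ (including the trivial walk of weight $0$ when $i=j$). Since $(\aminle)^*\bzero$ takes the coordinate-wise maximum over $j$, the quantity $\max((\aminle)^*\bzero)$ equals the supremum of weights of all walks in $\digr(\aminle)$. Hypothesis (b) then yields $w(P)\leq\lambda$ for every walk $P$, so $-\lambda+w(P)\leq 0$, which is exactly $\lambda$-security.

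For the converse, assume $\digr(\aminle)$ is $\lambda$-secure. Any cycle in $\digr(\aminle)$ can be traversed arbitrarily many times to produce walks of arbitrarily large weight; $\lambda$-security thus forces every cycle to have non-positive weight, i.e.\ $\rad(\aminle)\leq 0$, which is (a). Consequently $(\aminle)^*$ converges and its entries are exactly the maximal walk weights (with the diagonal pinned at $0\leq\lambda$ since $\lambda\in\unint$). By $\lambda$-security each such walk weight is at most $\lambda$, hence $\max((\aminle)^*\bzero)\leq\lambda$, which is (b).

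Combining the two directions with Theorem~\ref{t:pure} gives the corollary. There is no genuine obstacle: the only point requiring a little care is that the condition $\max((\aminle)^*\bzero)\leq\lambda$ must be read as a statement about \emph{all} walk weights simultaneously (including the trivial walks contributing the identity part of $(\aminle)^*$), and that the non-positivity of cycle weights must be argued separately so that the sum of formal series $(\aminle)^*$ is well-defined before appealing to the walk interpretation.
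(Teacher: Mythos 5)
Your proposal is correct and follows essentially the same route as the paper: both reduce the statement to the two existence conditions of Theorem~\ref{t:pure} (equivalently $\rho(\aminle)\leq 0$ and $\max((\aminle)^*\bzero)\leq\lambda$) and translate them via the optimal-walk interpretation of the Kleene star, with the converse handled by noting that a positive-weight cycle would violate $\lambda$-security upon repetition. Your write-up is merely more explicit about the role of the trivial walks and the convergence of $(\aminle)^*$ than the paper's terse argument.
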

\begin{proof} By Theorem~\ref{t:pure}, pure
{\L}ukasiewicz eigenvector associated with $\lambda$ exists if and
only if 1)$\rho(\aminle)\leq\ 0$ and 2)$\max((\aminle)^*)\leq
\lambda$. The first condition implies that $(\aminle)^*$ exists, and
the second condition implies that  $\digr(A^{(\lambda)})$ is
$\lambda$-secure, by the walk interpretation $(\aminle)^*$.
Conversely, the $\lambda$-security is impossible if
$\digr(A^{(\lambda)})$ has a cycle with positive weight, and the
condition $\max((\aminle)^*)\leq \lambda$ follows from the optimal
walk interpretation of $(\aminle)^*$.
\end{proof}

We now describe the generating set of pure {\L}ukasiewicz eigenvectors.

\begin{corollary}
\label{c:pure} The set of pure {\L}ukasiewicz eigenvectors
associated with $\lambda\in\unint$ is the tropical convex hull of
vectors $u$ and $v^{(k)}$ for $k\in \Tilde{N}_C(A,\lambda)$ given
by~\eqref{xexpl} where
\begin{itemize}
\item[1.] $z(u)=-\infty$,
\item[2.] $z(v^{(k)})_k= (((\aminle)^*)^{\sharp}\otimes'\bunity)_k$
and $z(v^{(k)})_i=-\infty$ for $i\neq k$.
\end{itemize}
\end{corollary}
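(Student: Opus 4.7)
The plan is to apply Theorem~\ref{t:pure} and exploit the tropical linearity of $(\aminle)^*$ in order to express each pure eigenvector as a tropical convex combination of $u$ and the $v^{(k)}$. I handle the two inclusions separately.

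First I would verify that $u$ and every $v^{(k)}$ are pure eigenvectors by checking the two conditions in~\eqref{zconstr}. For $u$ this is immediate from $z(u)=-\infty$. For $v^{(k)}$, the $k$-th entry of $z(v^{(k)})$ is exactly the extremal bound $\bigl(((\aminle)^*)^{\sharp}\otimes'\bunity\bigr)_k$ and all other entries are $-\infty$, so the vector inequality in~\eqref{zconstr} holds componentwise, while the support condition is built into the assumption $k\in\Tilde{N}_C(A,\lambda)$. The upper bound $v^{(k)}\leq\bunity$ then follows from the duality~\eqref{e:resid}: the part $(1-\lambda)\otimes(\aminle)^*\bzero$ is bounded by $\bunity$ precisely because of the existence hypothesis $\max((\aminle)^*\bzero)\leq\lambda$, and the part coming from $z(v^{(k)})_k$ is bounded because $z(v^{(k)})_k$ saturates the residuation bound. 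With $u$ and the $v^{(k)}$ confirmed to be pure eigenvectors, the inclusion of their tropical convex hull in the pure eigenspace follows because the pure eigenspace is tropically convex, as noted right after~\eqref{belleq-pure}.

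For the reverse inclusion, I take any pure eigenvector $x=(\aminle)^*\bigl((1-\lambda)\otimes\bzero\oplus z(x)\bigr)$ and, for each $k\in\Tilde{N}_C(A,\lambda)$, define $\alpha_k:=z(x)_k-z(v^{(k)})_k$ (with the convention $-\infty-\text{finite}=-\infty$). The first constraint in~\eqref{zconstr} forces $\alpha_k\leq 0$, and the support condition $z(x)_i=-\infty$ for $i\notin\Tilde{N}_C(A,\lambda)$ lets me rewrite $z(x)=\bigoplus_k\alpha_k\otimes z(v^{(k)})_k\otimes e_k$, where $e_k$ denotes the vector with $k$-th entry $0$ and the remaining entries $-\infty$. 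Applying $(\aminle)^*$ and noting that the extra copies of $(1-\lambda)\otimes\bzero$ generated by the combination are absorbed because $\alpha_k\leq 0$, I obtain $x=0\otimes u\oplus\bigoplus_k\alpha_k\otimes v^{(k)}$, which is a valid tropical convex combination since $0\oplus\bigoplus_k\alpha_k=0$.

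The only delicate step is the upper-bound check on the $v^{(k)}$, which requires carefully combining the residuation identity with the existence criterion supplied by Theorem~\ref{t:pure}; the rest is a routine Kleene-star manipulation.
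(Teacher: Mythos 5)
Your proposal is correct and follows essentially the same route as the paper: both prove one inclusion from Theorem~\ref{t:pure} together with the tropical convexity of the pure eigenspace, and the other by observing that every admissible $z(x)$ is a tropical convex combination of $z(u)=-\infty$ and the $z(v^{(k)})$ (your $\alpha_k$ decomposition) and then pushing this through the max-linearity of~\eqref{xexpl}. Your version merely spells out the absorption of the extra $(1-\lambda)\otimes\bzero$ terms and the normalization $0\oplus\bigoplus_k\alpha_k=0$, which the paper leaves implicit.
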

\begin{proof}
By Theorem~\ref{t:pure}, $u$ and $v^{(k)}$ are pure {\L}ukasiewicz eigenvectors associated with $\lambda$,
and so is any tropically convex combination of them. Further, all $z(x)$ satisfying~\eqref{zconstr}
are tropical convex combinations of $z(u)$ and $z(v^{(k)})$. Using max-linearity of~\eqref{xexpl},
we express any pure {\L}ukasiewicz eigenvector  as a tropical convex combination of
$u$ and $v^{(k)}$.
\end{proof}

This result also leads to a description of all {\L}ukasiewicz eigenvectors when $\lambda=1$.

\begin{corollary}
\label{c:lambdaone} When $\lambda=1$, all nontrivial {\L}ukasiewicz
eigenvectors are pure. Further, nontrivial eigenvectors exist if and
only if $\rad(A)=1$. In this case, the {\L}ukasiewicz eigenspace is
the tropical convex hull of $\bzero$ and the columns of
$1\otimes(\amine)^*$ with indices in  $\Tilde{N}_C(A,1)$ (i.e., the
fundamental eigenvectors of $A$ shifted by one).
\end{corollary}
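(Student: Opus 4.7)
The plan is to specialize Theorem~\ref{t:pure} and Corollary~\ref{c:pure} to the case $\lambda=1$. The first assertion---that every nontrivial {\L}ukasiewicz eigenvector is pure---is immediate from the definition of purity: at $\lambda=1$ the constraint $x_i\geq 1-\lambda=0$ is automatic for any {\L}ukasiewicz eigenvector, so ``pure'' imposes nothing.

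Next, I would verify that for $A\in\unint^{n\times n}$ with $\lambda=1$ the two existence hypotheses of Theorem~\ref{t:pure} hold automatically. The bound $\rho(A)\leq 1$ is forced by $a_{ij}\in[0,1]$, which makes every cycle mean at most $1$. For $\max((\amine)^*\bzero)\leq 1$, observe that all entries of $\amine$ lie in $[-1,0]$, so every walk in $\digr(\amine)$ has nonpositive additive weight; combined with the empty walk this gives $(\amine)^*_{ij}\leq 0$ with $(\amine)^*_{ii}=0$, and hence $((\amine)^*\bzero)_i=\max_j (\amine)^*_{ij}=0\leq 1$. Thus pure eigenvectors always exist at $\lambda=1$, though in general only the trivial one.

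Nontriviality is then read off the final clause of Theorem~\ref{t:pure}: whenever $\lambda\neq\rho(A)$ we are forced to take $z(x)=-\infty$, so~\eqref{xexpl} collapses to $x=(\amine)^*\bzero=\bzero$. Consequently, for $\lambda=1$ a nontrivial eigenvector can exist only when $\rho(A)=1$. Conversely, when $\rho(A)=1$ the set $\Tilde{N}_C(A,1)$ is nonempty (it picks one index from each strongly connected component of $\crit(\amine)$), and any $z(x)$ supported there yields a nontrivial pure eigenvector.

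It remains to match the generators supplied by Corollary~\ref{c:pure} with those in the statement. The vector $u=(\amine)^*\bzero$ equals $\bzero$ by the computation above, accounting for the generator $\bzero$. For $k\in\Tilde{N}_C(A,1)$, criticality gives $(\amine)^*_{kk}=0$ while $(\amine)^*_{jk}\leq 0$ for every $j$, so
\[
z(v^{(k)})_k=\bigl(((\amine)^*)^{\sharp}\otimes'\bunity\bigr)_k=1-\max_j (\amine)^*_{jk}=1.
\]
Hence $\bzero\oplus z(v^{(k)})$ is the $k$-th standard basis vector (a $1$ in position $k$, zeros elsewhere), and applying $(\amine)^*$ to it gives $(v^{(k)})_i=\max(0,(\amine)^*_{ik}+1)$. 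Since the one-edge walk $i\to k$ shows $(\amine)^*_{ik}\geq a_{ik}-1\geq -1$, the inner maximum simplifies to $(\amine)^*_{ik}+1$, producing exactly $v^{(k)}=1\otimes(\amine)^*_{\cdot k}$, as required. I do not anticipate a substantive obstacle; the delicate part is the careful bookkeeping of shifts between $A$, $\amine$, and the factor $1\otimes$ in the final expression.
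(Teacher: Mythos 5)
Your proposal is correct and follows essentially the same route as the paper: specialize Theorem~\ref{t:pure} and Corollary~\ref{c:pure} to $\lambda=1$, note that purity and both existence hypotheses become automatic, observe $(((\amine)^*)^{\sharp}\otimes'\bunity)_k=1$, and identify the resulting generators with $\bzero$ and the shifted critical columns of $(\amine)^*$. Your write-up merely spells out in more detail the computations (e.g.\ that $(\amine)^*\bzero=\bzero$ and that the inner maximum in $v^{(k)}$ simplifies) that the paper asserts without elaboration.
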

\begin{proof}
When $\lambda=1$  we have $\lambda+x_i-1=x_i\geq 0$, hence all nontrivial eigenvectors are pure.
Next we apply Theorem~\ref{t:pure} with $\lambda=1$. Vector $(1-\lambda)\otimes (\aminle)^*\bzero$ becomes
$(\amine)^*\bzero=\bzero$, and the condition $\max(\aminle)^*\bzero\leq\lambda$ is always true.
It remains to apply Corollary~\ref{c:pure} observing that $(((\amine)^*)^{\sharp}\otimes'\bunity)_i=1$
for all $i$.
\end{proof}

Note that in this case it can be shown that $\bzero$ and $v^{(i)}$ for $i=1,\ldots,l$, are the extreme points
of the set of {\L}ukasiewicz eigenvectors.


\if{
\begin{problem}
Investigate when the pure eigenspace is convex.
\end{problem}
}\fi

\subsection{$(K,L)$ eigenvectors}

Now we consider $(K,L)$ {\L}ukasiewicz eigenvectors, i.e., $x\in[0,1]^n$ such that
$A\otimesluk x=\lambda\otimesluk x$,
$x_i\geq (1-\lambda)$ for $i\in K$ and
$x_i\leq (1-\lambda)$ for $i\in L$, where $K,L\subseteq\{1,\ldots,n\}$ are such that
$K\cup L=\{1,\ldots,n\}$ and $K\cap L=\emptyset$.

We obtain the following description of $(K,L)$-eigenvectors, which
uses the concept of tropical Schur complement, introduced by Akian,
Bapat and Gaubert~\cite{ABG-04}, Definition~2.13:
\begin{equation}
\label{def:schur}
\Schur(K,\lambda,A)=A_{LL}\tplus A_{LK} (\aminlambda_{KK})^* \aminlambda_{KL}.
\end{equation}
Observe that  $\Schur^{(1)}(K,\lambda,A)=\amine_{LL}\tplus \amine_{LK} (\aminlambda_{KK})^* \aminlambda_{KL}$.

\begin{theorem}
\label{t:KLeig}
Let $\lambda$ satisfy $0<\lambda<1$, and let $(K,L)$ be a partition of $\{1,\ldots,n\}$
where $K$ and $L$ are proper subsets of $\{1,\ldots,n\}$.
Matrix $A\in\unint^{n\times n}$ has a $(K,L)$ {\L}ukasiewicz eigenvector
associated with $\lambda$ if and only if $\rho(A_{KK})\leq\lambda$ and $\aminlambda_{LK}(\aminlambda_{KK})^*\bzero_K\leq\bzero_L$.
In this case the set of $(K,L)$-eigenvectors is given by
\begin{equation}
\label{xLconstr}
\bzero_L\leq x_L \leq \min((\Schur^{(1)}(K,\lambda,A))^{\sharp}\otimes'\bzero_L,(1-\lambda)\otimes \bzero_L),
\end{equation}
\begin{equation}
\label{xKconstr}
x_K=(\aminlambda_{KK})^*(\aminlambda_{KL}x_L\tplus (1-\lambda)\bzero_K\tplus z_K(x)),
\end{equation}
where $z_K(x)$ is any vector with components in $K$ satisfying
\begin{equation}
\label{zKconstr}
\begin{split}
z_K(x)&\leq (\amine_{LK} (\aminlambda_{KK})^*)^{\sharp}\otimes'\bzero_L,\\
i\notin N_C(A_{KK},\lambda)&\Rightarrow  z_K(x)_i=-\infty,
\end{split}
\end{equation}
and $x_K$, resp. $x_L$ are restrictions of $x$ to the index set $K$,
resp. to $L$.
\end{theorem}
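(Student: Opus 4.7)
The plan is to rewrite the max-{\L}ukasiewicz eigenvalue equation as a block system---a tropical Bellman equation on the indices $K$ coupled with a one-sided tropical inequality on $L$---and then combine Corollary~\ref{c:bellclosed}, the residuation duality~\eqref{e:resid}, and the Schur complement definition~\eqref{def:schur} to read off the parametric description of the solution set.

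First, using the conversion~\eqref{convert}, the equation $A\otimesluk x = \lambda\otimesluk x$ reads $\max((\amine x)_i,0)=\max(\lambda-1+x_i,0)$ coordinatewise. For $i\in K$ the sign condition $x_i\geq 1-\lambda$ makes the right-hand side equal to $\lambda-1+x_i$, and subtracting $\lambda-1$ turns the equation into $x_i=(\aminle x)_i\tplus(1-\lambda)$, while for $i\in L$ the sign condition $x_i\leq 1-\lambda$ makes the right-hand side zero, so the equation becomes the one-sided inequality $(\amine x)_i\leq 0$. Splitting each into $K$- and $L$-blocks yields the Bellman-type equation
\begin{equation}
x_K=\aminle_{KK}x_K\tplus\aminle_{KL}x_L\tplus(1-\lambda)\bzero_K
\end{equation}
on $K$ and the inequality $\amine_{LK}x_K\tplus\amine_{LL}x_L\leq\bzero_L$ on $L$.

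Next I would treat $x_L$ as a parameter and apply Corollary~\ref{c:bellclosed} to the $K$-equation with right-hand side $\aminle_{KL}x_L\tplus(1-\lambda)\bzero_K$. This forces $\rad(\aminle_{KK})\leq 0$, i.e.\ $\rad(A_{KK})\leq\lambda$, and produces the parametric form~\eqref{xKconstr}, where $z_K(x)$ is supported on the critical indices of $\aminle_{KK}$ exactly as in the second line of~\eqref{zKconstr}. Substituting this $x_K$ into the $L$-inequality, the coefficient of $x_L$ is $\amine_{LL}\tplus\amine_{LK}(\aminle_{KK})^*\aminle_{KL}$, which equals $\Schur^{(1)}(K,\lambda,A)$ by~\eqref{def:schur}; the constant term is $(1-\lambda)\amine_{LK}(\aminle_{KK})^*\bzero_K=\aminle_{LK}(\aminle_{KK})^*\bzero_K$ thanks to the block identity $\aminle_{LK}=(1-\lambda)\otimes\amine_{LK}$; and the $z_K$ contribution is $\amine_{LK}(\aminle_{KK})^* z_K(x)$.

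Since these three pieces are independent, each must separately satisfy the upper bound $\leq\bzero_L$. Translating via the duality~\eqref{e:resid}, the $x_L$-piece gives $x_L\leq(\Schur^{(1)}(K,\lambda,A))^{\sharp}\otimes'\bzero_L$, which combined with the range $\bzero_L\leq x_L\leq(1-\lambda)\bzero_L$ forced by $0\leq x_i\leq 1-\lambda$ on $L$ yields~\eqref{xLconstr}; the constant piece gives exactly the necessity of $\aminle_{LK}(\aminle_{KK})^*\bzero_K\leq\bzero_L$; and the $z_K$-piece gives the first line of~\eqref{zKconstr}. For sufficiency I would plug any such $(x_L,z_K(x))$ back, use $(\aminle_{KK})^*=\aminle_{KK}(\aminle_{KK})^*\tplus I$ from~\eqref{kls-prop} to verify the $K$-equation, and observe that $x_K\geq(1-\lambda)\bzero_K$ is automatic. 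The main bookkeeping obstacle I foresee is keeping track of when the relevant block is shifted by $1$ (coming from $\amine$ in~\eqref{convert}) versus by $\lambda$ (coming from the scaled Bellman equation on $K$), and making sure the factor $(1-\lambda)$ that converts between $\amine$ and $\aminle$ lands in the right place so that the Schur complement and the constant term collapse exactly as stated.
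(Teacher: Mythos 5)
Your proposal follows essentially the same route as the paper: split the converted equation into a Bellman equation on $K$ (shifted by $\lambda$) and a one-sided inequality on $L$, solve the $K$-block parametrically via Corollary~\ref{c:bellclosed}, substitute into the $L$-block, recognize the Schur complement~\eqref{def:schur}, and separate the resulting inequality into the three constraints via the residuation duality~\eqref{e:resid}. The only point you leave unaddressed is the upper bound $x_K\leq\bunity_K$; the paper disposes of it by residuating the $L$-inequality $\amine_{LK}x_K\tplus\amine_{LL}x_L\leq\bzero_L$ to get $x_K\leq(\amine_{LK})^{\sharp}\otimes'\bzero_L\leq\bunity_K$ (using that $L$ is nonempty and the entries of $\amine$ lie in $[-1,0]$), so this imposes no additional constraint on the parametric description.
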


\begin{proof}
By definition, $(K,L)$ {\L}ukasiewicz eigenvectors satisfy the
following system

\begin{eqnarray}
\label{Keqn}
\amine_{KL}x_L\tplus \amine_{KK}x_K\tplus\bzero_K = (\lambda-1)\otimes x_K,\\
\label{Leqn}
\amine_{LL}x_L\tplus \amine_{LK}x_K\tplus\bzero_L = \bzero_L.
\end{eqnarray}

We start by writing out the solution of~\eqref{Keqn}. Subtracting
$(\lambda-1)$ from both sides of this equation we obtain
\begin{equation}
\label{Keqn-mod}
\aminlambda_{KL}x_L\tplus \aminlambda_{KK}x_K\tplus(1-\lambda)\otimes \bzero_K = x_K
\end{equation}

Equation~\eqref{Keqn-mod} has solutions if and only if
$\rho(A_{KK})\leq\lambda$. Let $z_K$ be any $K$-vector (that is, a
vector with all components in $K$) such that
\begin{equation}
\label{zKdef}
i\notin N_C(A_{KK},\lambda)\Rightarrow (z_K)_i=-\infty.
\end{equation}
Then vector $x$ satisfies~\eqref{Keqn-mod} if and only if it
satisfies
\begin{equation}
\label{xk-expr}
x_K=(\aminlambda_{KK})^*(\aminlambda_{KL}x_L\tplus (1-\lambda)\bzero_K\tplus z_K),
\end{equation}
Observe that if $\rho(A_{KK})<\lambda$ then $\lambda$ is not the
(unique) eigenvalue of $A_{KK}$, and $N_C(A_{KK},\lambda)$ is empty
so that $z_K=-\infty$.

Equation~\eqref{Leqn} can be rewritten as inequality
\begin{equation}
\label{Leqn-mod}
\amine_{LL}x_L\tplus \amine_{LK}x_K\leq \bzero_L,
\end{equation}
and substituting $x_K$ from~\eqref{xk-expr} we get
\begin{equation}
\label{Leqn-mod2}
(\amine_{LL}\tplus \amine_{LK} (\aminlambda_{KK})^* \aminlambda_{KL})x_L \tplus \aminlambda_{LK}(\aminlambda_{KK})^*\bzero_K\tplus \amine_{LK} (\aminlambda_{KK})^* z_K\leq\bzero_L
\end{equation}
Expression in the brackets equals
$\Schur^{(1)}(K,\lambda,A)$. The inequality~\eqref{Leqn-mod2} is
equivalent to the following three inequalities:
\begin{equation}
\label{Leqn-mod3}
\begin{split}
&x_L\leq (\Schur^{(1)}(K,\lambda,A))^{\sharp}\otimes'\bzero_L,\quad
\aminlambda_{LK}(\aminlambda_{KK})^*\bzero_K\leq\bzero_L,\\
&z_K\leq (\amine_{LK} (\aminlambda_{KK})^*)^{\sharp}\otimes'\bzero_L
\end{split}
\end{equation}

We also must require that $(1-\lambda)\otimes\bzero_K\leq x_K\leq
\bunity_K$ and $\bzero_L\leq x_L\leq (1-\lambda)\otimes\bzero_L$.

However, from~\eqref{xk-expr} we conclude that $x_K\geq
(1-\lambda)(\aminlambda_{KK})^*\bzero_K\geq
(1-\lambda)\otimes\bzero_K$, so the inequality $x_K\geq
(1-\lambda)\otimes\bzero_K$ is automatically fulfilled.
In particular, this implies the existence of a nontrivial $(K,L)$
eigenvector associated with $\lambda<1$, provided that
$\rho(\aminlambda_{KK})\leq 0$ and
$\aminlambda_{LK}(\aminlambda_{KK})^*\bzero_K\leq\bzero_L$.

If $x_K$ and $x_L$ satisfy~\eqref{xk-expr} and~\eqref{Leqn-mod3}
then in particular we have~\eqref{Leqn-mod}, and hence
$x_K\leq(\amine)^{\sharp}_{KL}\otimes'\bzero_L\leq \bunity_K$. Thus
$(1-\lambda)\otimes\bzero_K\leq x_K\leq \bunity_K$ is satisfied
automatically, and the claim of the theorem follows.
\end{proof}

We now give a graph-theoretic interpretation for the existence conditions in
Theorem~\ref{t:KLeig}.

\begin{definition}
\label{def:secure} Let $\digr$ be a weighted digraph with nodes
$\{1,\ldots,n\}$. Let $K$ and $L$ be proper subsets of
$\{1,\ldots,n\}$ such that $(K,L)$ is a partition of
$\{1,\ldots,n\}$. This partition is called secure, with respect to
$\digr$, if every walk in $\digr$, that starts in a node $\ell\in L$
and has all other nodes in $K$, has a non-positive weight.\\

We say that the partition $([n],\emptyset)$ is secure if
$\digr(A^{(\lambda)})$ is $\lambda$-secure, and that the partition
$(\emptyset,[n])$ is secure.
\end{definition}

For example, consider the {\L}ukasiewicz eigenproblem with
$$
A=
\begin{pmatrix}
0.3 & 0.1 & 0.2 & 0 & 0.3\\
0.7 & 0.3 & 0.5 & 0.5 & 0.3\\
0.3 & 0.2 & 0.3 & 0.5 & 0.3\\
0.1 & 0.2 & 0.1 & 0.3 & 0.3\\
0.3 & 0 & 0.2 & 0.2 & 0.3
\end{pmatrix}
$$
and $\lambda=0.4$. This is the associated matrix of the graph on
Figure~\ref{f:network} (right). Consider secure partitions of the
graph associated with $A^{(\lambda)}$. One of them has $L=\{1,5\}$,
$K=\{2,3,4\}$ and is shown on Figure~\ref{f:KLex}. In the graph
associated with $A^{(\lambda)}$, it is secure to travel from any
node that belongs to $L$ to any node belonging to $K$ and to
continue a walk in $K$ without having any deficit of the budget. On
the other hand, the partition $L=\{5,4\}$, $K=\{1,2,3\}$ is not
secure, because there is a walk from index $4 \in L$ to index $1 \in
K$ with positive weight (the walk $p=(4,2,1)$ with $w(p)=0.1$).

\begin{figure}
 \begin{center}
  \includegraphics[width=200pt]{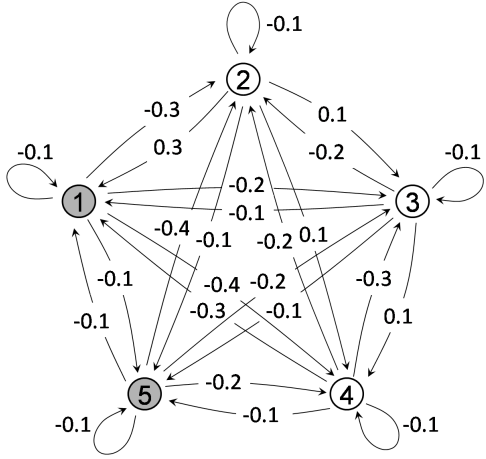}
  \caption{Secure partition $L=\{1,5\}$, $K=\{2,3,4\}$ ($\lambda=0,4$)}\label{f:KLex}
  \end{center}
\end{figure}

\begin{proposition}
\label{t:partitions}
Let $\lambda$ satisfy $0<\lambda<1$, and let $(K,L)$ be a partition of $\{1,\ldots,n\}$
where $K$ and $L$ are proper subsets of $\{1,\ldots,n\}$.
Matrix $A\in\unint^{n\times n}$ has a $(K,L)$ {\L}ukasiewicz eigenvector
associated with $\lambda$ if and only if $(K,L)$ is a secure partition.
\end{proposition}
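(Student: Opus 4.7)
The plan is to invoke Theorem~\ref{t:KLeig}, which asserts that a $(K,L)$-{\L}ukasiewicz eigenvector associated with $\lambda$ exists if and only if the two algebraic conditions $\rho(A_{KK})\leq\lambda$ and $\aminlambda_{LK}(\aminlambda_{KK})^*\bzero_K\leq\bzero_L$ both hold. It then remains to show that the conjunction of these conditions is equivalent to $(K,L)$ being a secure partition of $\digr(A^{(\lambda)})$. The bridge in both directions is the optimal walk interpretation of the Kleene star recorded in Section~\ref{ss:cyclicity}: $\rho(\aminlambda_{KK})\leq 0$ is equivalent to the absence of positive-weight cycles in $\digr(A^{(\lambda)})$ entirely inside $K$, and in that case the $(i,j)$-entry of $(\aminlambda_{KK})^*$ is the maximum additive weight of a walk from $i\in K$ to $j\in K$ whose intermediate nodes lie in $K$.

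For the forward direction, assume both algebraic conditions. Then $(\aminlambda_{KK})^*$ is well defined, and the $\ell$-th coordinate of $\aminlambda_{LK}(\aminlambda_{KK})^*\bzero_K$ is precisely the supremum, over all walks in $\digr(A^{(\lambda)})$ that start at $\ell\in L$, take a first step into $K$, and thereafter remain in $K$, of the walk weight. The hypothesis that this coordinate is $\leq 0$ for every $\ell\in L$ therefore says that every such nontrivial walk has non-positive weight; including the trivial length-$0$ walks (weight $0$), we recover exactly the secure-partition condition from Definition~\ref{def:secure}.

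For the converse direction, suppose $(K,L)$ is secure. The critical step is to rule out any cycle of positive weight contained entirely in $K$, so as to ensure $\rho(\aminlambda_{KK})\leq 0$. Since $L$ is a nonempty proper subset, we may pick some $\ell\in L$; because the digraph is complete (all entries of $A$ are real), the edge $\ell\to k$ exists for every $k\in K$. If a positive-weight cycle through some $k\in K$ stayed inside $K$, then prepending the edge $\ell\to k$ and traversing the cycle arbitrarily many times would produce walks starting at $\ell\in L$ with all other nodes in $K$ and with unbounded weight, contradicting security. Hence $(\aminlambda_{KK})^*$ is well defined, and a second appeal to the walk interpretation shows that each coordinate of $\aminlambda_{LK}(\aminlambda_{KK})^*\bzero_K$ is the maximum weight of a walk leaving $L$ and staying in $K$, which by security is $\leq 0$. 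Thus both algebraic conditions follow.

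The only genuinely subtle step is the exclusion of positive cycles inside $K$ in the reverse direction; it rests on the fact that the underlying digraph is complete and that $L$ is nonempty, so that any such cycle would automatically be reachable from some $\ell\in L$ in a single step. The remainder is a clean back-and-forth translation between the algebraic Kleene-star expressions in Theorem~\ref{t:KLeig} and the combinatorial statement about walks leaving $L$ and staying in $K$.
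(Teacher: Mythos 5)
Your proposal is correct and follows essentially the same route as the paper: reduce to the two algebraic conditions of Theorem~\ref{t:KLeig} and translate them back and forth via the optimal-walk interpretation of the Kleene star, with the converse direction hinging on the observation that a positive-weight cycle inside $K$ would be reachable from some $\ell\in L$ and hence contradict security. Your version merely makes explicit (via completeness of the digraph and nonemptiness of $L$) the reachability step that the paper leaves implicit.
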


\begin{proof}
We have the following necessary and sufficient conditions for existence of a nontrivial {\L}ukasiewicz eigenvector
with a given $\lambda$: 1)$\rho(\aminlambda_{KK})\leq 0$ and
2)$\aminlambda_{LK}(\aminlambda_{KK})^*\bzero_K\leq\bzero_L$.
Using the walk interpretation of $(\aminlambda_{KK})^*$ we see that 1) and 2) imply that $(K,L)$ is secure. Conversely, $(K,L)$ cannot be secure if $\rho(\aminlambda_{KK})> 0$. Indeed, then
there exists a cycle in $K$ with weight exceeding $0$. Any walk connecting to this cycle and following it will
have a positive weight, starting from some length, thus $\rho(\aminlambda_{KK})\leq 0$. In this case $(\aminlambda_{KK})^*$ exists and we have $\aminlambda_{LK}(\aminlambda_{KK})^*\bzero_K\leq\bzero_L$ since $(K,L)$ is secure.
\end{proof}

As a corollary of Theorem~\ref{t:KLeig}, we can describe the set of $(K,L)$-eigenvectors as a tropical convex hull of
no more than $n+1$ explicitly defined points. Both statement and proof are analogous to those of Corollary~\ref{c:pure}.

\begin{corollary}
\label{c:KLeig} The set of $(K,L)$ {\L}ukasiewicz eigenvectors is
the tropical convex hull of vectors $u$, $v^{(\ell)}$ for $\ell\in
L$, and $w^{(k)}$ for $k\in K$ such that $k\in N_C(A_{KK},\lambda)$,
defined by~\eqref{xKconstr} and the following settings:
\begin{itemize}
\item[1.] $u_L=\bzero_L$ and $z_K(u)=-\infty$,
\item[2.] $v^{(\ell)}_i=0$ for $i\in L$ and $i\neq\ell$, $v^{(\ell)}_{\ell}=\min((\Schur^{(1)}(K,\lambda,A))^{\sharp}\otimes'\bzero_L)_{\ell},1-\lambda)$ and
$z_K(v^{(\ell)})=-\infty$,
\item[3.] $w^{(k)}_L=\bzero_L$, $z_K(w^{(k)})_k=((\amine_{LK} (\aminlambda_{KK})^*)^{\sharp}\otimes'\bzero_L)_k$
and $z_K(w^{(k)})_i=-\infty$ for $i\neq k$.
\end{itemize}
and the $K$-subvectors of $u$, $v^{(l)}$ and $w^{(k)}$ are given by~\eqref{xKconstr}.
\end{corollary}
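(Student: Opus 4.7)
The plan is to mirror the argument of Corollary~\ref{c:pure}. Theorem~\ref{t:KLeig} parametrizes every $(K,L)$ eigenvector through a pair $(x_L,z_K(x))$ constrained to the ``tropical box'' (\ref{xLconstr})--(\ref{zKconstr}), together with the max-affine formula (\ref{xKconstr}) recovering $x_K$. The vectors $u$, $v^{(\ell)}$, and $w^{(k)}$ listed in the statement are precisely the corner points of this box (the common lower corner $x_L=\bzero_L$, $z_K=-\infty$, together with the coordinatewise upper corners), and the strategy is to show they generate the solution set as a tropical convex hull. Direct substitution into Theorem~\ref{t:KLeig} certifies that each is a legitimate $(K,L)$ eigenvector, since each corresponds to choosing an extremal admissible value in a single coordinate of $x_L$ or of $z_K$, the others being kept at the lower extreme.

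To establish tropical convexity of the solution set, consider a combination $\bigoplus_\mu \kappa_\mu y^{(\mu)}$ of $(K,L)$ eigenvectors with $\max_\mu\kappa_\mu=0$. The box inequalities (\ref{xLconstr})--(\ref{zKconstr}) are preserved: the normalization $\max=0$ safeguards lower bounds (the coefficient equal to $0$ keeps one $y^{(\mu)}$ intact), the inequality $\kappa_\mu\leq 0$ safeguards upper bounds, and the sparsity pattern $z_K=-\infty$ outside $N_C(A_{KK},\lambda)$ is inherited coordinatewise. For the $K$-part, tropical linearity of the Kleene-star product yields
\[
\bigoplus_\mu \kappa_\mu y^{(\mu)}_K = (\aminlambda_{KK})^*\Bigl(\aminlambda_{KL}x_L\oplus\bigl(\max_\mu\kappa_\mu\bigr)(1-\lambda)\bzero_K\oplus \bigoplus_\mu\kappa_\mu z_K(y^{(\mu)})\Bigr),
\]
and the normalization $\max_\mu\kappa_\mu=0$ is exactly what reproduces the constant $(1-\lambda)\bzero_K$ inside (\ref{xKconstr}), so the combination is itself a $(K,L)$ eigenvector.

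Conversely, for an arbitrary $(K,L)$ eigenvector $x$ with data $(x_L, z_K(x))$, I would exhibit the coefficients explicitly: take $\kappa_0=0$ for $u$; $\kappa_\ell=x_\ell-v^{(\ell)}_\ell$ for each $\ell\in L$; and $\mu_k=z_K(x)_k-z_K(w^{(k)})_k$ for each $k\in N_C(A_{KK},\lambda)$. The bounds (\ref{xLconstr}) and (\ref{zKconstr}) force all these coefficients into $(-\infty,0]$, while $\kappa_0=0$ ensures $\max=0$. A coordinatewise check shows that the $L$-part of $\kappa_0 u\oplus\bigoplus_\ell\kappa_\ell v^{(\ell)}$ recovers $x_L$ (the contribution from $u$ handles the zero floor, while each $v^{(\ell)}$ calibrates coordinate $\ell$), and feeding the reconstructed $(x_L, z_K(x))$ into (\ref{xKconstr}) via the linearity displayed above recovers $x_K$.

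The main technical subtlety I anticipate is the bookkeeping around the constant term $(1-\lambda)\bzero_K$ in (\ref{xKconstr}); unlike $x_L$ and $z_K$, it does not rescale under tropical multiplication, so the normalization $\max_\mu\kappa_\mu=0$ must be used precisely to preserve it when passing to tropical combinations. Once this one-line observation is isolated, the remaining verifications are mechanical and exactly parallel the proof of Corollary~\ref{c:pure}, now applied separately to the $x_L$ and $z_K$ coordinates of the tropical box.
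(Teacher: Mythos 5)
Your proposal is correct and follows essentially the same route as the paper: the paper's proof likewise invokes Theorem~\ref{t:KLeig} to certify that $u$, $v^{(\ell)}$, $w^{(k)}$ and their tropical convex combinations are $(K,L)$ eigenvectors, and then uses the max-linearity of~\eqref{xKconstr} to decompose an arbitrary eigenvector over these generators. Your version merely makes explicit the coefficients and the role of the normalization $\bigoplus_\mu\kappa_\mu=0$ in preserving the constant term, details the paper leaves implicit by analogy with Corollary~\ref{c:pure}.
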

\begin{proof}
By Theorem~\ref{t:KLeig}, $u$, $v^{(\ell)}$, $w^{(k)}$
are {\L}ukasiewicz $K,L$-eigenvectors associated with $\lambda$,
and so is any tropically convex combination of them. Further, all $z_K(x)$ satisfying~\eqref{zKconstr}
are tropical convex combinations of $z_K(u)=-\infty$ and $z_K(v^{(\ell)})$. Using max-linearity of~\eqref{xKconstr},
we express any pure {\L}ukasiewicz eigenvector  as a tropical convex combination of
$u$, $v^{(\ell)}$ and $w^{(k)}$.
\end{proof}

\subsection{Finding all secure partitions}

The problem of describing all {\L}ukasiewicz eigenvectors associated
with $\lambda$ can be associated with the following combinatorial
problem: 1) check whether graph $\digr(A^{(\lambda)})$ is
$\lambda$-secure, 2) describe all $(K,L)$ partitions which are
secure in $\digr(A^{(\lambda)})$.


Node $i$ of a weighted digraph is called {\bf secure} if the weight of every walk starting in $i$
is nonpositive.

\begin{theorem}
\label{t:secure-part}
Let $(K_1,L_1)$ and $(K_2,L_2)$ be two different secure partitions such that
$L_1\subseteq L_2$.
\begin{itemize}
\item[1.] There is a node $k\in L_2\backslash L_1=K_1\backslash K_2$, secure in $\digr(A^{(\lambda)}_{K_1K_1})$.
\item[2.] Partition $(K_1-k,L_1+k)$ is secure if and only if $k$ is secure in $\digr(A^{(\lambda)}_{K_1K_1})$.
\end{itemize}
\end{theorem}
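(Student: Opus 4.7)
My plan is to handle part (2) by direct unfolding of the secureness definition, and then to prove part (1) via a tropical Schur-complement reduction followed by an endpoint-of-optimal-walk argument.

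For (2), Proposition~\ref{t:partitions} applied to $(K_1,L_1)$ gives $\rho(A^{(\lambda)}_{K_1K_1})\leq 0$ together with the secureness bound on walks starting in $\ell\in L_1$ with other nodes in $K_1$. Writing $K':=K_1-k$, $L':=L_1+k$, the secureness conditions for $(K',L')$ on walks from $\ell\in L_1$ (with other nodes in $K'\subseteq K_1$) are automatic, so $(K',L')$ is secure iff every walk from $k$ with other nodes in $K'$ has nonpositive weight. If $k$ is secure in $\digr(A^{(\lambda)}_{K_1K_1})$, this is immediate since such walks are a special case. Conversely, any walk from $k$ in $K_1$ splits at its successive returns to $k$ into cycles through $k$ (each of weight $\leq 0$ by $\rho\leq 0$) followed by a final segment not revisiting $k$ (hence with other nodes in $K'$), and summing weights yields the required bound.

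For (1), set $M:=A^{(\lambda)}_{K_1K_1}$, $L:=L_2\backslash L_1=K_1\backslash K_2$, and let $\phi_k:=(M^*\bzero_{K_1})_k$, the maximum weight of a walk in $\digr(M)$ starting at $k$; then $k$ is secure in $\digr(M)$ iff $\phi_k=0$. Using the tropical Schur-complement formula for the Kleene star with the partition $K_1=K_2\sqcup L$, one has $(M^*)_{LL}=\tilde{S}^*$ and $(M^*)_{LK_2}=\tilde{S}^*M_{LK_2}M_{K_2K_2}^*$, where $\tilde{S}:=M_{LL}\oplus M_{LK_2}M_{K_2K_2}^*M_{K_2L}$. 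The secureness of $(K_2,L_2)$ yields $M_{LK_2}M_{K_2K_2}^*\bzero_{K_2}\leq\bzero_L$ (any walk from a node of $L\subseteq L_2$ that continues entirely inside $K_2$ has nonpositive weight), so the $j\in K_2$ contribution to $\phi_k$ is dominated by the $j\in L$ one for $k\in L$, giving $\phi|_L=\tilde{S}^*\bzero_L$. Moreover, cycles in $\digr(\tilde{S})$ lift to cycles in $\digr(M)$, so $\rho(\tilde{S})\leq\rho(M)\leq 0$ and $\tilde{S}^*$ is finite.

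It then suffices to apply the endpoint-of-optimal-walk trick in $\digr(\tilde{S})$: pick $k^*\in L$ maximizing $\phi|_L$, and let $P$ be an optimal walk in $\digr(\tilde{S})$ from $k^*$ ending at some $k^{**}\in L$. For any walk $Q$ in $\digr(\tilde{S})$ from $k^{**}$, the concatenation $P\cdot Q$ is a walk from $k^*$ of weight $\phi_{k^*}+w(Q)\leq\phi_{k^*}$, hence $w(Q)\leq 0$; therefore $\phi_{k^{**}}=0$ and $k^{**}\in L_2\backslash L_1$ is secure in $\digr(M)$. The main obstacle is correctly setting up the Schur-complement reduction in (1); once that is in place, the concluding endpoint-of-walk argument is a standard tropical concatenation trick.
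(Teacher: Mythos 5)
Your argument is correct, but part 1 takes a genuinely different route from the paper's. The paper proceeds by contradiction: assuming no node of $K_1\setminus K_2$ is secure in $\digr(A^{(\lambda)}_{K_1K_1})$, it selects for each such node a positive-weight walk whose proper initial subwalks are all nonpositive, uses the security of $(K_2,L_2)$ to force the endpoint of such a walk back into $K_1\setminus K_2$, and then chains these walks until the finiteness of $K_1\setminus K_2$ produces a positive cycle, contradicting the security of $(K_1,L_1)$. You instead argue directly and algebraically: the block Kleene-star identity for the splitting $K_1=K_2\sqcup(K_1\setminus K_2)$ expresses the maximal-walk-weight vector restricted to $K_1\setminus K_2$ as $\tilde{S}^*\bzero$ for the tropical Schur complement $\tilde{S}$ of the $K_2$-block, the security of $(K_2,L_2)$ supplying exactly the inequality that lets the $K_2$-columns be absorbed; the endpoint of an optimal walk in $\digr(\tilde{S})$ is then the desired secure node. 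Both proofs ultimately rest on the same two facts (no positive cycles in $K_1$, and nonpositivity of walks from $L_2$ that continue in $K_2$), but yours is constructive -- it exhibits the secure node as an explicit optimal-walk endpoint and ties part 1 to the same Schur-complement machinery the paper already uses in Theorem~\ref{t:KLeig} -- at the price of the convergence bookkeeping for $(A^{(\lambda)}_{K_2K_2})^*$ and $\tilde{S}^*$, which you correctly supply via $\rho(\tilde{S})\le\rho(A^{(\lambda)}_{K_1K_1})\le 0$; the paper's version is more elementary and needs no Kleene stars at all. (A minor remark: the maximality of your starting node $k^*$ over $L_2\setminus L_1$ is never used -- any starting node in $L_2\setminus L_1$ works for the concatenation argument.) Your part 2 is essentially the paper's proof, with the ``only if'' direction stated directly via decomposition of a walk at its returns to $k$ rather than contrapositively.
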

\begin{proof}
1. Consider the set $M:=K_1\backslash K_2$ and assume that no node of $M$ is secure in $K_1$. Then for each node
$i\in M$ there exists a walk $P$ in $\digr(A^{(\lambda)}_{K_1K_1})$ starting in $i$, whose weight is positive, and such that the weights of all its proper subwalks starting in $i$ are non-positive. Let $\ell$ be the end node of the walk, and let $k$ be the last node of $M$ visited by the walk. If $k\neq\ell$ then the subwalk of $P$ starting in $k$ and ending in $\ell$ has a positive
weight, which contradicts with the security of $(K_2,L_2)$. We conclude that for all walks in $\digr(A^{(\lambda)}_{K_1K_1})$
starting in $M$ and having positive weight, the end node also belongs to $M$. However, using such walks and the finiteness of $M$ we obtain
that $\digr(A^{(\lambda)}_{K_1K_1})$ has a cycle with positive weight, contradicting the
security of $(K_1,L_1)$. Hence $M$ contains a node that is secure in $K_1$.

2. The ``if'' part: Observe that all walks starting in $L_1$ and continuing in $K_1-k$ have nonpositive weight since tha partition
$(K_1,L_1)$ is secure. All walks starting in $k$ and continuing in $K_1-k$ have nonpositive weight since $k$ is secure in $\digr(A^{(\lambda)}_{K_1K_1})$.\\
The ``only if'' part: If $k$ is not secure, then there is a walk in $\digr(A^{(\lambda)}_{K_1K_1})$ going out of $k$ and having a
positive weight. If this walk ends in $k$ then we obtain a cycle with positive weight, in contradiction with the security of $(K_1,L_1)$. Otherwise, take a subwalk which contains $k$ only as the starting node, and then its weight must be also positive. This shows that $(K_1-k,L_1+k)$ is not secure.
\end{proof}

Let us emphasize that the above theorem also holds in the case of
$K_1=[n]$ when we require that $\digr(A^{(\lambda)})$ is
$\lambda$-secure, since this condition (by Theorem~\ref{t:pure} and
Corollary~\ref{c:pure}) implies that $\rho(A^{(\lambda)})\leq 0$.
The case when $L_2=[n]$, in which case $(K_2,L_2)$ is secure by
definition, yields the following corollary.

\begin{corollary}
\label{c:secure-part}
Let $(K,L)$ be a secure partition, then
\begin{itemize}
\item[1.] There is a node $k\in K$, secure in $\digr(A^{(\lambda)}_{KK})$.
\item[2.] Partition $(K-k,L+k)$ is secure if and only if $k$ is secure in $\digr(A^{(\lambda)}_{KK})$.
\end{itemize}
\end{corollary}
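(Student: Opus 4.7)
The plan is to deduce Corollary~\ref{c:secure-part} as a direct instantiation of Theorem~\ref{t:secure-part}. I would set $(K_1, L_1) := (K, L)$ and $(K_2, L_2) := (\emptyset, [n])$. By Definition~\ref{def:secure} the partition $(\emptyset, [n])$ is declared secure, and $L_1 = L \subseteq [n] = L_2$ holds trivially. Assuming $K \neq \emptyset$ (the case $K = \emptyset$ makes item~1 vacuous and item~2 empty), the two partitions are distinct, so all the hypotheses of Theorem~\ref{t:secure-part} are fulfilled.

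For item~1 of the corollary I would then invoke part~1 of the theorem: it supplies a node $k \in L_2 \setminus L_1 = [n] \setminus L = K$ that is secure in $\digr(A^{(\lambda)}_{K_1 K_1}) = \digr(A^{(\lambda)}_{KK})$, which is precisely what is asserted. Item~2 is literally the statement of part~2 of the theorem specialized to $K_1 = K$, $L_1 = L$, so no further argument is needed.

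The only subtlety worth flagging is the boundary case $K = [n]$, $L = \emptyset$. Here the assumption that $(K, L)$ is secure amounts, via Definition~\ref{def:secure}, to $\digr(A^{(\lambda)})$ being $\lambda$-secure, which (through Theorem~\ref{t:pure} and Corollary~\ref{c:pure-secure}) guarantees $\rho(A^{(\lambda)}) \leq 0$. This is exactly the scenario the remark preceding the corollary highlights, ensuring that the cycle-based argument in the proof of part~1 of Theorem~\ref{t:secure-part} remains valid. Consequently I do not anticipate any real obstacle: the corollary is essentially a tag-on specialization, and the only care required is the minor bookkeeping around the extremal partitions.
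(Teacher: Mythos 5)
Your proposal is correct and matches the paper's own derivation: the paper obtains Corollary~\ref{c:secure-part} precisely by taking $(K_2,L_2)=(\emptyset,[n])$, which is secure by Definition~\ref{def:secure}, and specializing Theorem~\ref{t:secure-part}, with the same remark about the boundary case $K=[n]$ guaranteeing $\rho(A^{(\lambda)})\leq 0$. No gaps.
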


The equivalence of security of partition $(K,L)$ with existence of $(K,L)$-{\L}ukasiewicz
eigenvector implies that there is a unique minimal secure partition.

\begin{proposition}
\label{p:least-secure} Digraph $\digr(A^{(\lambda)})$ has the least
secure partition, which corresponds to the greatest {\L}ukasiewicz
eigenvector associated with $\lambda$.
\end{proposition}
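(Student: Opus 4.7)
The plan is to produce the greatest {\L}ukasiewicz eigenvector first, and then to read off the desired least secure partition from its support pattern. The key algebraic observation is that the set $V_L(A,\lambda) := \{x\in[0,1]^n : A\otimesluk x = \lambda\otimesluk x\}$ is closed under componentwise $\oplus$. This follows directly from the identities $A\otimesluk(x\oplus y) = (A\otimesluk x)\oplus(A\otimesluk y)$ and $\lambda\otimesluk(x\oplus y) = (\lambda\otimesluk x)\oplus(\lambda\otimesluk y)$, which in turn reduce to the distributivity of $\max$ over itself and over the truncated sum. Since $V_L(A,\lambda)\subseteq[0,1]^n$ is compact and closed under $\oplus$, I would define the greatest eigenvector $x^\ast$ as the componentwise supremum (equivalently, the $\oplus$-sum) of all eigenvectors in $V_L(A,\lambda)$; by the closure property and a straightforward compactness argument, $x^\ast\in V_L(A,\lambda)$.

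Next I would read off the partition: set $K^\ast := \{i : x^\ast_i \geq 1-\lambda\}$ and $L^\ast := \{i : x^\ast_i < 1-\lambda\}$. By construction $x^\ast$ is a $(K^\ast,L^\ast)$-{\L}ukasiewicz eigenvector (handling trivially the degenerate cases $K^\ast=[n]$ or $L^\ast=[n]$ via Corollary~\ref{c:pure-secure} and the convention of Definition~\ref{def:secure}), so by Proposition~\ref{t:partitions} the partition $(K^\ast, L^\ast)$ is secure.

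To see $(K^\ast,L^\ast)$ is the least secure partition, I would take any other secure partition $(K,L)$, invoke Proposition~\ref{t:partitions} to obtain an associated $(K,L)$-eigenvector $x$, and exploit $x \leq x^\ast$ (which holds by the definition of $x^\ast$): for every $i\in K$ we have $x^\ast_i \geq x_i \geq 1-\lambda$, hence $i\in K^\ast$. Therefore $K\subseteq K^\ast$, equivalently $L^\ast\subseteq L$, so $(K^\ast,L^\ast)$ is minimal with respect to $L$ (equivalently maximal with respect to $K$) among all secure partitions. This simultaneously establishes existence of the least secure partition and its correspondence with $x^\ast$.

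The main obstacle is the closure of $V_L(A,\lambda)$ under $\oplus$: once this is established, the argument is straightforward. One has to be slightly careful at the two boundary partitions $([n],\emptyset)$ and $(\emptyset,[n])$, which are handled by convention in Definition~\ref{def:secure}, and by placing all indices with $x^\ast_i = 1-\lambda$ into $K^\ast$ so that the inclusion $K\subseteq K^\ast$ remains valid for every competitor partition regardless of how its boundary indices are assigned.
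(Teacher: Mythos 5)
Your proposal is correct and follows essentially the same route as the paper: obtain the greatest {\L}ukasiewicz eigenvector as the $\oplus$-supremum of the eigenspace, read off the partition from its level set at $1-\lambda$, and use the domination $y\leq x^\ast$ of any competitor $(K,L)$-eigenvector to force the inclusion of partitions. The only cosmetic difference is in justifying existence of the greatest eigenvector: you argue via closure under $\oplus$ plus compactness, while the paper notes the eigenspace is a tropical polyhedron and takes the $\oplus$-sum of its finitely many extremal points.
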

\begin{proof}
First observe that there is the set of {\L}ukasiewicz eigenvectors contains the greatest element:
this is a tropical convex hull of a finite number of extremal points,
and the greatest element is precisely the ``sum'' ($\oplus$) of all extremal points.

Let $z$ be this greatest element, and let $(K,L)$ be the partition such that
$z_i<1-\lambda$ for all $i\in L$ and $z_i\geq 1-\lambda$ for all $i\in K$. This partition is secure.
For any other secure partition $(K',L')$ there exists a $(K',L')$-eigenvector $y$ (associated with $\lambda$),
and we have $y\leq z$ and hence $y_i<1-\lambda$ for all $i\in L$ implying $L\subseteq L'$.
\end{proof}

Theorem~\ref{t:secure-part}, Corollary~\ref{c:secure-part} and Proposition~\ref{p:least-secure} show that
all secure partitions of $\digr(A^{(\lambda)}$ can be described by means of the following algorithm:

\begin{algorithm}(Describing all secure partitions of $\digr(A^{(\lambda)})$)\\

{\bf Part 1.} For each $i\in\{1,\ldots,n\}$, solve
\begin{equation}
\label{e:tlp}
\begin{split}
& \max x_i,\quad 0\leq x_i\leq 1\;\text{s.t.} \; \forall i,\\
& A^{(1)}\otimes x\oplus \bzero =(\lambda-1)\otimes x\oplus \bzero
\end{split}
\end{equation}
The vector $\overline{x}$ whose components are solutions of~\eqref{e:tlp} is the greatest {\L}ukasiewicz
eigenvector associated with $\lambda$. The partition $(K,L)$ such that $\overline{x}_i<1-\lambda$ for all
$i\in L$ and $\overline{x}_k\geq 1-\lambda$ for $k\in K$ is secure partition
with the least $L$.

Problem~\eqref{e:tlp} is an instance of the tropical linear programming problems
treated by Butkovic and Aminu~\cite{BA-08,But:10} and Gaubert~et~al.~\cite{GKS-12}.
It can be solved in pseudopolynomial time~\cite{BA-08,But:10}, by means of a psudopolynomial
number of calls to a mean-payoff game oracle~\cite{GKS-12}.

{\bf Part 2.} For each secure partition $(K,L)$, all secure
partitions of the form $(K-k, L+k)$ can be found by means of
shortest path algorithms with complexity at most $O(n^3)$ (for the
classical Floyd-Warshall algorithm). All secure partitions can be
then identified by means of a Depth First Search procedure.
\end{algorithm}
\begin{proof}
Part 1. is based on Proposition~\ref{p:least-secure}.

For part 2., note that by Corollary~\ref{c:secure-part}, for each secure $(K,L)$ with $K$ nonempty, there exist greater secure partitions.  By Theorem~\ref{t:secure-part} part 2, for each secure partition $(K,L)$ and for each $k\in K$, partition $(K-k, L+k)$ is secure if and only if $k$ is secure $\digr(A_{KK}^{(\lambda)})$. This holds if and only if the $k$th row of $(A_{KK}^{(\lambda)})^*$ has all components nonpositive.
Matrix $(A_{KK}^{(\lambda)})^*$ can be computed in $O(n^3)$ operations.
\end{proof}

{\bf Example}

The example shows the work of the algorithm - describing all secure
partitions of $\digr(A^{(\lambda)})$.

Let $\lambda= 0.6$ and consider
\begin{equation}
A=
\begin{pmatrix}
0.6 & 0.7 & 0.2 \\
0.4 & 0.5 & 0.7 \\
0.3 & 0.2 & 0.4
\end{pmatrix}.
\end{equation}

Then we have

\begin{equation}
A^{(1)}=
\begin{pmatrix}
-0.4 & -0.3 & -0.8 \\
-0.6 & -0.5 & -0.3 \\
-0.7 & -0.8 & -0.6
\end{pmatrix},\quad
A^{(0.6)}=
\begin{pmatrix}
0 & 0.1 & -0.4 \\
-0.2 & -0.1 & 0.1 \\
-0.3 & -0.4 & -0.2
\end{pmatrix}.
\end{equation}

The {\L}ukasiewicz eigenproblem with $\lambda=0.6$ amounts to
solving $A^{(1)}x\oplus\bzero=(-0.4+x)\oplus\bzero$ subject to
$0\leq x_i\leq 1$ for all $i$.

According to the algorithm, we first need to find the greatest
max-{\L}ukasiewicz eigenvector. It can be verified that $(1\; 0.8\;
0.7)$ is a pure {\L}ukasiewicz eigenvector (note that all components
are greater than $0.4$). As there is a pure eigenvector, it follows
that the greatest max-{\L}ukasiewicz eigenvector is also pure, and
the minimal partition is the least partition $K=\{1,2,3\}$,
$L=\{\emptyset\}$. In fact, $(1\; 0.8\; 0.7)$ is the greatest
solution of the tropical Z-matrix equation $x=A^{(0.6)}x\oplus
(0.4+\bzero)$ satisfying $x_i\leq 1$, hence this is exactly the
greatest (pure) max-{\L}ukasiewicz eigenvector. Moreover, it can be
found that the pure eigenspace is the max-plus segment with extremal
points $(1\; 0.8\; 0.7)$ and $(0.6\; 0.5\; 0.4)$.

The second part of the algorithm describes how to find all secure
partitions. The algorithm starts with the secure partition with
minimal $L$, according to the ~\eqref{c:secure-part}, and tries to
increase $L$ by adding indices from $K$. In the beginning we have
$A^{(\lambda)}_{KK}= A^{(0.6)}$ written above.

To add an index $i \in K$ to $L$, we have to check, whether $i$ is
secure in $\digr(A^{(\lambda)}_{KK})$, i.e., whether the weight of
every walk starting in $i$ is nonpositive in
$\digr(A^{(\lambda)}_{KK})$. Index $1$ is not secure, because the
walk $p=(1,2)$ has positive weight $w(p)=0.1$, that is, $1$ can not
be added to $L$. Similarly, $2$ is not secure and can not be added
to $L$. Index $3$ is secure, because any walk starting in $3$ has
non positive weight. Hence the next secure partition is $K=\{1,2\}$
and $L=\{3\}$. We find that
\begin{equation}
A_{KK}^{(0.6)}=\left(
 \begin{array}{r r}
  0  &  0.1   \\
-0.2 & -0.1
\end{array}
\right),\quad
(A_{KK}^{(0.6)})^*=\left(
 \begin{array}{r r}
  0  &  0.1   \\
-0.2 & 0
\end{array}
\right).
\end{equation}
Using Theorem~\ref{t:KLeig} and Corollary~\ref{c:KLeig} we find that
the $(K,L)$ eigenspace is the tropical convex hull of  
$u=(0.5\; 0.4\; 0)$, $v^{(3)}=(0.6\; 0.5\; 0.4)$ and $w^{(1)}=(0.7\; 0.5\;
0)$. (Note that $v^{(3)}$ is also a pure eigenvector. In general, some vectors
can be considered as $(K,L)$ eigenvectors for several choices of $(K,L)$.)

Similarly as above, index $1$ is not secure in
$\digr(A_{KK}^{(0.6)})$, because of the walk $p=(1,2)$ with
$w(p)=0.1$. On the other hand, $2$ is secure in
$\digr(A_{KK}^{(0.6)})$. Thus index $2$ can be added and we get
further secure partition $(K,L)$ with $K=\{1\}$ and $L=\{2,3\}$.
Then $A_{KK}^{(0.6)}=(A_{KK}^{(0.6)})^*$ consists just of one entry
$0$. Using Theorem~\ref{t:KLeig} and Corollary~\ref{c:KLeig} we find
that the $(K,L)$ eigenspace is the tropical convex hull of
$u=(0.4\; 0\; 0)$, $v^{(2)}=(0.5\; 0.4\; 0)$,
$v^{(3)}=(0.4\; 0\; 0.3)$ and $w^{(1)}=(0.6\; 0\; 0)$.
\if{
\begin{equation}
A_{KK}^{(0.6)}=\left(
 \begin{array}{r}
  0
\end{array}
\right),
\end{equation}
}\fi

Index $1 \in K$ is secure in $\digr(A_{KK}^{(0.6)})$ and we get the
last secure partition $(K, L)$ with $L=\{1,2,3\}$, giving the
background eigenvectors.  See figure ~\ref{f:sec-part}). The
greatest background eigenvector is $(0.4\; 0.3\; 0.3)$.

\begin{figure}
 \begin{center}
  \includegraphics[width=140pt]{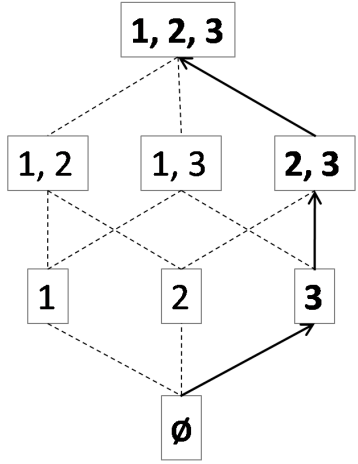}\hspace{-5pt}
  \caption{Secure partition of $\digr(A^{(\lambda)}$ (only sets $L$ are shown)}\label{f:sec-part}
  \end{center}
\end{figure}


\section{Powers and orbits}

As we are going to show, any vector orbit in max-{\L}ukasiewicz algebra is ultimately periodic.
This ultimate periodicity can be described in terms of CSR representation of periodic powers
$(\amine)^{t}$. We assume the critical graph of $A$ occupies the first $c$ nodes. Note that the
CSR terms computed for $A^t$ and $(\amine)^t$ are the same.

Observe that if $\rad(A)=1$ then
the critical graph $\crit(\amine)$ consists of all cycles, such that all entries of
these cycles have zero weight.  Using Theorem~\ref{schneider} we obtain the following.

\begin{theorem}
\label{CSR-luk}
For any $A\in\unint^{n\times n}$ and $x\in\unint^{n}$ there exists a number $T(A)$ such that for all $t\geq T(A)$,
\begin{itemize}
\item[1.] $A^{\lmult t}\lmult x =\bzero$ if $\rad(A)<1$,
\item[2.] $A^{\lmult t}\lmult x = CS^tRx\lplus\bzero$ if $\rad(A)=1$.
In this case $(A^{\otimesluk t}\otimesluk x)_i=S^tR(x\oplus\bzero)_i$ for all
$i=1,\ldots, c$
\end{itemize}
\end{theorem}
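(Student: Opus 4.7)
The plan is to reduce Łukasiewicz powers to max-plus powers of $\amine$ via the key identity \eqref{convert} and then apply the Cyclicity Theorem (Theorem~\ref{CSR}). First, I would prove by induction on $t$ the formula
\begin{equation*}
A^{\lmult t}\lmult x \;=\; (\amine)^t\otimes x \;\oplus\; \bigoplus_{k=0}^{t-1}(\amine)^k\bzero,
\end{equation*}
whose base case $t=1$ is exactly \eqref{convert}, and whose induction step uses \eqref{convert} again together with the distributivity of $\otimes$ over $\oplus$. Since all entries of $\amine$ lie in $[-1,0]$, every cycle in $\digr(\amine)$ has nonpositive weight, so $\rad(\amine)\leq 0$ and the Kleene star $(\amine)^*$ converges. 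Its diagonal entries are $0$ and all entries are $\leq 0$, which immediately gives $(\amine)^*\bzero=\bzero$. In particular, for $t\geq n$ the second summand stabilises to $(\amine)^*\bzero=\bzero$.

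For part (1), suppose $\rad(A)<1$, so that $\rad(\amine)<0$. By the Cyclicity Theorem, entries of $(\amine)^t$ grow asymptotically like $t\rad(\amine)$ and thus tend to $-\infty$. Because $x$ is bounded, $(\amine)^t x$ also tends componentwise to $-\infty$; for $t\geq T(A)$ sufficiently large we have $(\amine)^t x\leq\bzero$ entrywise. Combined with the previous paragraph this yields $A^{\lmult t}\lmult x = \bzero$.

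For part (2), suppose $\rad(A)=1$, i.e.\ $\rad(\amine)=0$. Theorem~\ref{CSR} applied to $\amine$ (using $\rho(\amine)=0$) gives $(\amine)^t = CS^tR$ for all $t\geq T(A)$. Plugging this into the induction formula and using $(\amine)^*\bzero=\bzero$ produces $A^{\lmult t}\lmult x = CS^tRx\oplus\bzero$, which is the first claim. For the refinement on critical indices, the second part of Theorem~\ref{CSR} gives $((\amine)^t)_{i\cdot}=(S^tR)_{i\cdot}$ for $i\leq c$, hence $(CS^tRx)_i=(S^tRx)_i$. It remains to absorb the $\oplus\bzero$: here I would exploit that $\rad(\amine)=0$ together with $(\amine)_{ij}\leq 0$ forces every critical edge to have weight exactly $0$, so any walk of length $t$ starting at a critical $i$ and staying on critical edges has weight $0$; such walks always exist inside the relevant strongly connected component of $\crit(\amine)$. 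Consequently $\max_k((\amine)^t)_{ik}\geq 0$ for critical $i$ and $t\geq T(A)$, whence $(S^tRx)_i=((\amine)^t x)_i\geq 0$ for any $x\in\unint^n$. This makes the outer $\oplus\bzero$ redundant on critical rows, and since $x\oplus\bzero=x$ the formula $(A^{\otimesluk t}\otimesluk x)_i = (S^tR(x\oplus\bzero))_i$ follows.

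The main obstacle is the last step: explaining cleanly why the $\oplus\bzero$ can be dropped on critical rows. The argument hinges on the special fact that, when $\rad(A)=1$, the critical edges of $A$ are exactly those with $a_{ij}=1$, so that $S$ is a $\{0,-\infty\}$-matrix and the critical subgraph propagates the value $0$ through any walk, guaranteeing the nonnegativity needed to absorb the $\bzero$. The remaining steps (induction, convergence of $(\amine)^*$, invocation of the Cyclicity Theorem) are essentially routine transcriptions of the tropical theory via \eqref{convert}.
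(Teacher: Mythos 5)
Your proposal is correct and follows essentially the same route as the paper: iterate \eqref{convert} to get $A^{\lmult t}\lmult x=(\amine)^t x\oplus\bzero$ (using that all entries of $\amine$ are nonpositive so the accumulated $(\amine)^k\bzero$ terms collapse to $\bzero$), then invoke Theorem~\ref{CSR} for $\amine$, and for critical rows use that the $i$th row of $(\amine)^t$ contains a zero entry to absorb the outer $\oplus\bzero$. The only cosmetic difference is your slightly overstated remark that the critical edges are \emph{exactly} those with $a_{ij}=1$ (they are contained among them), but your argument only needs that critical edges have weight $0$ in $\amine$, which is what the paper uses as well.
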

\begin{proof}
We start by showing the following identity:
\begin{equation}
\label{convert-vecs}
A^{\lmult t} \lmult x = (\amine)^t x\tplus\bzero,
\end{equation}
Indeed, iterating~\eqref{convert} we obtain
\begin{equation*}
\begin{split}
A^{\lmult t}\lmult x &= (\amine) (A^{\lmult t-1}\lmult x)\tplus\bzero \\
&=\ldots\\
&=(\amine)^t x\tplus ((\amine)^{(t-1)}\bzero \tplus \ldots \tplus (\amine)\bzero \tplus \bzero)\\
&=(\amine)^t x\tplus\bzero.
\end{split}
\end{equation*}
For the final reduction we used that all entries of $\amine$ are nonpositive, hence $(\amine)^l\tmult\bzero\leq\bzero$ for all
$l\geq 0$. Now we substitute the result of Theorem~\ref{CSR} observing that $\rad^t(A)CS^tRx\leq\bzero$ for all
large enough $t$ if $\rad(A)<1$.

If $\rad(A)=1$ then for each $t$ and $i=1,\ldots, c$, the $i$th row of $(\amine)^t$ has a zero entry, from
which it follows that $((\amine)^t\otimes\bzero))_i=0$ for all large enough $t$ and $i=1,\ldots,c$ implying that
$(\amine)^t\otimes x\oplus\bzero)_i$ equals $(\amine)^t(x\oplus\bzero)_i$ and hence
$S^tR(x\oplus\bzero)_i$ for all big enough $t$.
\end{proof}

It also follows that
\begin{equation}
\label{convert-mats}
A^{\lmult t} = (\amine)^{t-1}A\tplus\bzero,
\end{equation}
where $\bzero$ is the $n\times n$ matrix consisting of all zeros. Consequently, we obtain that
if $\rad(A)<1$ then $A^{\lmult t}=\bzero$ for sufficiently large $t$, and if not then
\begin{equation}
\label{CSR-mats}
A^{\lmult t} =CS^{t-1}RA\tplus\bzero
\end{equation}

For any $n\times n$ matrix $A$ in max-{\L}ukasiewicz algebra, we can define the ``matrix of ones''
$A^{[1]}$ by
\begin{equation}
a^{[1]}_{ij}=
\begin{cases}
1, & \text{if $a_{ij}=1$},\\
0, & \text{otherwise}.
\end{cases}
\end{equation}

\begin{theorem}
Orbits of vectors and matrix powers of a matrix $A$ in max-{\L}ukasiewicz algebra are ultimately periodic.
They are ultimately zero if $\rad(A)<1$. Otherwise if $\rad(A) =1$,
\begin{itemize}
\item[1.] The ultimate period of $\{A^{\lmult t}\lmult x\}$ divides the cyclicity $\gamma$ of the critical graph $\crit(A)$.
\item[2.] The ultimate period of $\{A^{\lmult t}\}$ is equal to $\gamma$.
\end{itemize}
\end{theorem}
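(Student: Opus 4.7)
I would prove the three statements in sequence, using Theorem~\ref{CSR-luk} and identities~\eqref{convert-mats}--\eqref{CSR-mats} as the main tools. \emph{Case $\rad(A)<1$:} the vector statement is exactly Theorem~\ref{CSR-luk}, part~1. For the matrix statement I would apply~\eqref{convert-mats} to obtain $A^{\lmult t}=(\amine)^{t-1}A\tplus\bzero$. By Theorem~\ref{CSR}, $\rho(\amine)=\rad(A)-1<0$ and $\{CS^{t-1}R\}$ is bounded, so every entry of $(\amine)^{t-1}A=\rho^{t-1}(\amine)\,CS^{t-1}RA$ tends to $-\infty$, giving $A^{\lmult t}=\bzero$ for $t$ sufficiently large.

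\emph{Case $\rad(A)=1$, divides direction.} Theorem~\ref{CSR-luk}, part~2, yields $A^{\lmult t}\lmult x=CS^tRx\tplus\bzero$, and~\eqref{CSR-mats} yields $A^{\lmult t}=CS^{t-1}RA\tplus\bzero$, for $t$ large. The matrix $S$ is block-diagonal with one irreducible block per strongly connected component $C$ of $\crit(A)$, each of maximum cycle mean $0$ and cyclicity $\gamma_c(C)$, so the blockwise classical cyclicity theorem gives $S^{t+\gamma}=S^t$ for all large $t$, where $\gamma=\operatorname{lcm}_C\gamma_c(C)$. Consequently both $\{A^{\lmult t}\lmult x\}$ and $\{A^{\lmult t}\}$ are ultimately $\gamma$-periodic, establishing claim~1 and the ``divides'' half of claim~2.

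\emph{Case $\rad(A)=1$, equal direction for matrix powers.} Here I would track the diagonal entries of $A^{\lmult t}$ at critical nodes. Since $A=1\tmult\amine$ in the sense of a scalar--matrix product in max-plus, identity~\eqref{convert-mats} rewrites entrywise as $(A^{\lmult t})_{ij}=\max\bigl((\amine^t)_{ij}+1,\,0\bigr)$. Fix $i\leq c$ in an SCC $C$ of $\crit(A)$. The walk interpretation of $\amine^t$ combined with $\rho(\amine)=0$ gives $(\amine^t)_{ii}\leq 0$, with equality iff there is a walk from $i$ to $i$ of length $t$ lying entirely in $\crit(A)$; since non-critical cycles have strictly negative $\amine$-weight, this occurs precisely when $t$ is a sum of lengths of critical cycles through $i$, i.e.~iff $\gamma_c(C)\mid t$ (for $t$ sufficiently large). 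Hence $(A^{\lmult t})_{ii}=1$ exactly when $\gamma_c(C)\mid t$, so the exact period of the scalar sequence $\{(A^{\lmult t})_{ii}\}$ is $\gamma_c(C)$. Any period of $\{A^{\lmult t}\}$ must therefore be a common multiple of all $\gamma_c(C)$, hence a multiple of $\gamma$; combined with the previous paragraph, the period equals $\gamma$.

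The main obstacle is justifying the strict inequality $(\amine^t)_{ii}<0$ when $\gamma_c(C)\nmid t$ for large $t$: this rests on the weight gap between critical cycles (weight $0$) and non-critical cycles (weight strictly $<0$) in $\amine$, together with the gcd definition of $\gamma_c(C)$, which precludes purely critical closed walks through $i$ of forbidden length. Everything else reduces to bookkeeping once Theorem~\ref{CSR-luk} and the block structure of $S$ are in hand.
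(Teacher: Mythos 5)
Your proof is correct, and for the heart of the matter --- the lower bound showing the period of $\{A^{\lmult t}\}$ is \emph{at least} $\gamma$ --- you take a genuinely different route from the paper. The paper introduces the ``matrix of ones'' $A^{[1]}$, observes that $(A^{\lmult t})^{[1]}=(A^{[1]})^{\lmult t}$ and that the {\L}ukasiewicz product of $\{0,1\}$ matrices is the Boolean product, and then delegates to the classical fact that Boolean matrix powers have ultimate period equal to the cyclicity of the (nontrivial part of the) underlying graph; since $\rad(A)=1$ forces critical cycles to consist entirely of entries equal to $1$, this cyclicity is exactly $\gamma$. You instead argue directly on the diagonal entries $(A^{\lmult t})_{ii}=\max((\amine^t)_{ii}+1,0)$ at critical nodes, using the weight gap between critical and non-critical cycles of $\amine$ and the gcd characterization of cyclicity to show these entries equal $1$ precisely when $\gamma_c(C)\mid t$ (for large $t$). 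The two arguments rest on the same combinatorial fact about lengths of closed walks in the critical graph, but yours is self-contained and makes the mechanism explicit, whereas the paper's Boolean reduction is more modular and immediately gives the lower bound for \emph{all} entries of $A^{\lmult t}$, not just the critical diagonal. The remaining parts (ultimate vanishing when $\rad(A)<1$, and the ``divides $\gamma$'' direction via the CSR form and the block structure of $S$) coincide with the paper's use of Theorem~\ref{CSR-luk} and equation~\eqref{CSR-mats}.
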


\begin{proof}
1.: Follows from Theorem~\ref{CSR-luk}.
2.: Observe that $A\geq A^{[1]}$, and that $(A^{\lmult t})^{[1]}=(A^{[1]})^{\lmult t}$. Also,
the {\L}ukasiewicz product of two $\{0,1\}$ matrices coincides with their product in the
Boolean algebra. This implies that if $\rad(A)=1$ (i.e., the ``graph of ones'' has
nontrivial strongly connected components), then $(A^{\lmult t})^{[1]}$ is nonzero for any $t$,
and so is $A^{\lmult t}$. As the period of $(A^{\lmult t})^{[1]}=(A^{[1]})^{\lmult t}$ is $\gamma$,
the period of $A^{\lmult t}$ cannot be less. Equation~\eqref{CSR-mats}, expressing the ultimate powers
of $A$ in the $CSR$ form, assures that it cannot be more than $\gamma$.
\end{proof}

\begin{proposition}
The exact ultimate period of $\{A^{\lmult t}\lmult x\}$ can be computed in
$O(n^3\log n)$ time.
\end{proposition}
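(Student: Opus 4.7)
The strategy is to reduce the question to a purely tropical periodicity problem via the CSR form of Theorem~\ref{CSR-luk}. First I would compute $\rad(A)$ in $O(n^3)$ time (for instance, by Karp's algorithm applied to $\amine$). If $\rad(A)<1$ then by Theorem~\ref{CSR-luk} the orbit becomes $\bzero$ for all $t\geq T(A)$, so the exact ultimate period equals $1$ and we are done. Assume therefore that $\rad(A)=1$, and compute the critical graph $\crit(A)$ together with its cyclicity $\gamma$ in $O(n^3)$ time (gcd of cycle lengths inside each strongly connected component, then lcm over components). By the preceding theorem, the exact ultimate period $p$ must be a divisor of $\gamma$.

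To identify $p$, fix some $T\geq T(A)$ with $T=\operatorname{poly}(n)$, using the polynomial transient bounds standard in tropical linear algebra which transfer to max-{\L}ukasiewicz through~\eqref{convert-mats}. Precompute the repeated-squaring table $\{A^{\lmult 2^k}\}_{k=0}^{\lceil\log_2 T\rceil}$: each squaring is one matrix multiplication of cost $O(n^3)$, so the whole table is built in $O(n^3\log n)$ time. With this table, the vector $y:=A^{\lmult T}\lmult x$, and more generally $A^{\lmult d}\lmult y$ for any $d\leq\gamma$, can be assembled by $O(\log n)$ matrix--vector products for a per-vector cost of $O(n^2\log n)$. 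The exact period is the smallest divisor $p$ of $\gamma$ satisfying $A^{\lmult p}\lmult y=y$: starting from $d=\gamma$ and repeatedly attempting to remove one prime factor $q$ of the current $d$ whenever $A^{\lmult(d/q)}\lmult y$ still equals $y$ locates $p$ using at most $O(\log n)$ equality tests (since $\gamma\leq n$ has at most that many prime factors counted with multiplicity).

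All costs are then dominated by the $O(n^3\log n)$ precomputation of the squaring table, which yields the claimed bound. The correctness of an equality test follows from Theorem~\ref{CSR-luk}: once $T\geq T(A)$, checking $A^{\lmult(T+d)}\lmult x=A^{\lmult T}\lmult x$ amounts to checking $CS^{T+d}Rx\oplus\bzero=CS^T Rx\oplus\bzero$, which is routine. The main obstacle is a polynomial bound on $T(A)$: to afford only $O(\log n)$ squarings we need $T(A)=\operatorname{poly}(n)$. This is guaranteed by the classical tropical transient bounds cited in Section~\ref{ss:cyclicity}; without such a bound one would need more squarings and the budget $O(n^3\log n)$ would be exceeded.
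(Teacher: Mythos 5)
Your reduction (compute $\rad(A)$, dispose of the case $\rad(A)<1$, and observe that the ultimate period divides the cyclicity $\gamma$ of $\crit(A)$) matches the paper's setup, but the core of your algorithm --- jump past the transient by repeated squaring and then test divisors of $\gamma$ --- does not deliver the stated complexity, and the step you yourself flag as ``the main obstacle'' is in fact a genuine gap. The transient $T(A)$ of max-plus matrix powers is \emph{not} bounded by a polynomial in $n$: the classical bounds (Hartmann--Arguelles and their successors) depend on the numerical values of the entries, e.g.\ on the gap between the maximum cycle mean and the next cycle mean. Already for $n=2$, a matrix such as $\bigl(\begin{smallmatrix}0 & a\\ b & c\end{smallmatrix}\bigr)$ with $c<0$ tiny and $a+b<0$ has transient of order $(a+b)/c$, which is unbounded for fixed $n$. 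Hence $\lceil\log_2 T(A)\rceil$ is not $O(\log n)$ in the real-arithmetic model in which the $O(n^3\log n)$ claim is made, and your squaring table cannot be built within budget. A second, smaller error: $\gamma$ is the l.c.m.\ of the cyclicities of the components of $\crit(A)$ and can exceed $n$ (indeed it can be superpolynomial, of the order of Landau's function), so ``$\gamma\leq n$ has at most $O(\log n)$ prime factors'' and the claim that $A^{\lmult d}\lmult y$ for $d\leq\gamma$ is assembled from your table in $O(\log n)$ products both fail.

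The paper takes a different route precisely to avoid simulating the orbit up to the transient. Using the CSR identity $A^{\otimesluk t}\otimesluk x = C\bigl(S^tR(x\oplus\bzero)\bigr)\oplus\bzero$, it observes that the non-critical components are tropical affine images of the critical ones, so the period of the orbit equals the period of the critical subvector $S^tR(x\oplus\bzero)$; that period is then computed by the cyclic-classes algorithm of Sergeev~\cite{Ser-09}, which determines the periodic behaviour on $\crit(A)$ combinatorially in $O(n^3\log n)$ without ever reaching time $T(A)$. If you want to keep your divisor-testing idea, you would first need this reduction to the critical coordinates together with a transient-free way of evaluating $S^tR(x\oplus\bzero)$ in the periodic regime; as written, the argument does not establish the $O(n^3\log n)$ bound.
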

\begin{proof}
First we find $\rad(A)$, in no more than $O(n^3)$ time.
If $\rad(A)<1$ then all orbits convege to $\bzero$. Otherwise,
by Theorem~\ref{CSR-luk}, the subvector of $A^{\otimesluk t}\otimesluk x$
extracted from the first $c$ indices equals $S^tR(x\oplus\bzero)$. More generally,
\begin{equation}
\begin{split}
A^{\otimesluk t}\otimesluk x&=CS^tRx\oplus\bzero=\\
&=C(S^tR(x\oplus\bzero))\oplus\bzero,
\end{split}
\end{equation}
since $CS^tR\bzero\leq\bzero$ (observe that none of the entries of $\amine$ and hence any of its
powers exceed zero).  This shows that the last $(n-c)$ components of $A^{\otimesluk t}\otimesluk x$
are tropical affine combinations of the first (critical) coordinates, and hence we only need to determine
the period of $S^tR(x\oplus\bzero)$, which is the ultimate period of
the first $c$ coordinates of $(\amine)^t(x\oplus\bzero)$. The latter period can be computed
in $O(n^3\log n)$ time by means of an algorithm described in~\cite{Ser-09}, see also
\cite{But:10,HA-99}.
\end{proof}

Butkovi\v{c}~\cite{But:10} and Sergeev~\cite{Ser-11} studied the so-called attraction cones in tropical
algebra, that is, sets of vectors $x$ such that the orbit $A^tx$ hits an eigenvector of $A$ at some $t$.

In the case of max-{\L}ukasiewicz algebra, {\bf attraction sets} can be defined similarly.
Then either $\rho(A)<1$ and then all orbits converge to $0$, or $\rho(A)=1$ and then there may be a non-trivial periodic regime, in which the non-critical components of $A^{\otimesluk t}\otimesluk x$ are tropical affine
combinations of the critical ones. It follows that (like in max algebra) the convergence of
$A^{\otimesluk t}\otimesluk x$ to an eigenvector with eigenvalue $1$ is determined by critical components only.
Since these are given by $S^tR(x\oplus\bzero)$, the attraction sets in max-{\L}ukasiewicz algebra are solution sets to
\begin{equation}
\label{attr-sys}
Rx\oplus\bzero=SRx\oplus\bzero,\ 0\leq x_i\leq 1\forall i.
\end{equation}
This system of equations can be analysed as in~\cite{Ser-11}, a task that we postpone to a future
work.

{\bf Example.} Let us consider the following matrix
\begin{equation}
A=
\begin{pmatrix}
0.62  & 1.00  &  0.57 &  0.14\\
1.00 & 0.18 & 0.17 & 0.18\\
0.38 & 0.59 & 0.65 & 0.43\\
0.10 & 0.18 & 0.25 & 0.33
\end{pmatrix}
\end{equation}

Its max-{\L}ukasiewicz powers proceed as follows:

\begin{equation*}
\begin{split}
A^{\otimes_L 2}&=
\begin{pmatrix}
1 & 0.62 &  0.22 & 0.18\\
0.62 & 1 &  0.57 & 0.14\\
0.59 & 0.38 &  0.30 & 0.08\\
0.18 & 0.10 &  0   & 0
\end{pmatrix},\quad
A^{\otimes_L 3}=
\begin{pmatrix}
0.62 &  1 &  0.57 &  0.14\\
1 &  0.62 &  0.22 &  0.18\\
0.38 &  0.59 &  0.16 & 0\\
0.10 &  0.18 &  0 &  0
\end{pmatrix}.\\
A^{\otimes_L 4}&=
\begin{pmatrix}
1 & 0.62 & 0.22 &  0.18\\
0.62 & 1 & 0.57 &  0.14\\
0.59 & 0.38 &  0 & 0\\
0.18 & 0.10 &  0 & 0
\end{pmatrix},\quad
A^{\otimes_L 5}=
\begin{pmatrix}
0.62  &   1  &  0.57 &  0.14\\
1  &  0.62  &  0.22  & 0.18\\
0.38 & 0.59 &  0.16 &  0\\
0.10 & 0.18 &   0 & 0
\end{pmatrix}
\end{split}
\end{equation*}

We see that the sequence $\{A^{\otimes_L t}\}_{t\geq 1}$ is
ultimately periodic with period $2$, and the periodicity starts at
$t=3$. The term $(\amine)^{t-1}A=CS^{t-1}RA$ of~\eqref{convert-mats}
dominates in all the columns and rows of $A^{\otimes_L t}$ with
critical indices $(1,2)$. In the periodic regime $(t\geq 3)$, the
zero term dominates in almost all entries of the non-critical
$A^{\otimes_L t}_{MM}$ where $M=\{3,4\}$.

\section{Acknowledgement}

We would like to thank Dr. Imran Rashid for useful discussions that
have led to the idea of using the max-plus linear algebra and
tropical convexity in this context. We also thank the anonymous referee for a number of
useful comments.

\if{
\section{Discussion and further research}

We are led to the following questions:

{\bf 1. {\L}ukasiewicz eigenvectors.} Equation~\eqref{mp-spectral}
defines a tropical polyhedron, more precisely, a compact tropically
convex set with a finite number of extremal points.\\
$\bullet$ Design an algorithm for computing the extremal points (or
adapt the known methods of Allamigeon et al.~\cite{All:09,AllGG-10}.

{\bf 2. {\L}ukasiewicz matrix powers and orbits.}  We have shown
that the powers of matrices in {\L}ukasiewicz algebra are strongly
related to tropical algebra and are simpler than their tropical
counterpart. Generically they fall to zero after a finite
number of steps, and unfortunately, so do Sarah's funds in the aggressive network.\\
$\bullet$ If $\rho(A)<1$ then compute or estimate the time when
$A^{\otimes_L t}=\bzero$, and if $\rho(A)=1$ then give the bounds on
periodicity threshold.
}\fi


\end{document}